\providecommand{\U}[1]{\protect\rule{.1in}{.1in}}
\newtheorem{theorem}{Theorem}[section]
\newtheorem{conjecture}[theorem]{Conjecture}
\newtheorem{corollary}[theorem]{Corollary}
\newtheorem{example}[theorem]{Example}
\newtheorem{lemma}[theorem]{Lemma}
\newenvironment{proof}[1][Proof]{\noindent\textbf{#1.} }{\ \rule{0.5em}{0.5em}}
\begin{document}

\author{Vadim E. Levit\\Department of Computer Science\\Ariel University, Israel\\levitv@ariel.ac.il
\and Eugen Mandrescu\\Department of Computer Science\\Holon Institute of Technology, Israel\\eugen\_m@hit.ac.il}
\title{On an Annihilation Number Conjecture}
\date{}
\maketitle

\begin{abstract}
Let $\alpha(G)$ denote the cardinality of a maximum independent set, while
$\mu(G)$ be the size of a maximum matching in the graph $G=\left(  V,E\right)
$. If $\alpha(G)+\mu(G)=\left\vert V\right\vert $, then $G$ is a
\textit{K\"{o}nig-Egerv\'{a}ry graph}. If $d_{1}\leq d_{2}\leq\cdots\leq
d_{n}$ is the degree sequence of $G$, then the \textit{annihilation number}
$h\left(  G\right)  $ of $G$ is the largest integer $k$ such that
$\sum\limits_{i=1}^{k}d_{i}\leq\left\vert E\right\vert $
\cite{Pepper2004,Pepper2009}. A set $A\subseteq V$ satisfying $\sum
\limits_{a\in A}\deg(a)\leq\left\vert E\right\vert $ is an
\textit{annihilation set}; if, in addition, $\deg\left(  v\right)
+\sum\limits_{a\in A}\deg(a)>\left\vert E\right\vert $, for every vertex $v\in
V(G)-A$, then $A$ is a \textit{maximal annihilation set} in $G$.

In \cite{LarsonPepper2011} it was conjectured that the following assertions
are equivalent:

\textit{(i) }$\alpha\left(  G\right)  =h\left(  G\right)  $;

\textit{(ii)} $G$\ is a K\"{o}nig-Egerv\'{a}ry graph and every maximum
independent set is a maximal annihilating set.

In this paper, we prove that the implication \textquotedblright\textit{(i)}
$\Longrightarrow$ \textit{(ii)}\textquotedblleft\ is correct, while for the
opposite direction we provide a series of generic counterexamples.

\textbf{Keywords:} maximum independent set, matching, tree, bipartite graph,
K\"{o}nig-Egerv\'{a}ry graph, annihilation set, annihilation number.

\end{abstract}

\section{Introduction}

Throughout this paper $G=(V,E)$ is a finite, undirected, loopless graph
without multiple edges, with vertex set $V=V(G)$ of cardinality $\left\vert
V\left(  G\right)  \right\vert =n\left(  G\right)  $, and edge set $E=E(G)$ of
size $\left\vert E\left(  G\right)  \right\vert =m\left(  G\right)  $. If
$X\subset V(G)$, then $G[X]$ is the subgraph of $G$ induced by $X$. By $G-v$
we mean the subgraph $G[V(G)-\left\{  v\right\}  ]$, for $v\in V(G)$.
$K_{n},K_{m,n},P_{n},C_{n}$ denote respectively, the complete graph on
$n\geq1$ vertices, the complete bipartite graph on $m,n\geq1$ vertices, the
path on $n\geq1$ vertices, and the cycle on $n\geq3$ vertices, respectively.

The \textit{disjoint union} of the graphs $G_{1},G_{2}$ is the graph
$G_{1}\cup G_{2}$ having the disjoint union of $V(G_{1}),V(G_{2})$ as a vertex
set, and the disjoint union of $E(G_{1}),E(G_{2})$ as an edge set. In
particular, $nG$ denotes the disjoint union of $n>1$ copies of the graph $G$.

A set $S\subseteq V(G)$ is \textit{independent} if no two vertices from $S$
are adjacent, and by $\mathrm{Ind}(G)$ we mean the family of all the
independent sets of $G$. An independent set of maximum size is a
\textit{maximum independent set} of $G$, and $\alpha(G)=\max\{\left\vert
S\right\vert :S\in\mathrm{Ind}(G)\}$. Let $\Omega(G)$ denote the family of all
maximum independent sets.

A \textit{matching} in a graph $G$ is a set of edges $M\subseteq E(G)$ such
that no two edges of $M$ share a common vertex. A matching of maximum
cardinality $\mu(G)$ is a \textit{maximum matching}, and a \textit{perfect
matching} is one saturating all vertices of $G$.

It is known that $\lfloor\left\vert V\left(  G\right)  \right\vert
/2\rfloor+1\leq\alpha(G)+\mu(G)\leq\left\vert V\left(  G\right)  \right\vert
\leq\alpha(G)+2\mu(G)$ hold for every graph $G$ \cite{BGL2002}. If
$\alpha(G)+\mu(G)=\left\vert V\left(  G\right)  \right\vert $, then $G$ is
called a K\"{o}nig-Egerv\'{a}ry graph\textit{ }\cite{dem,ster}. For instance,
each bipartite graph is a K\"{o}nig-Egerv\'{a}ry graph \cite{eger,koen}.
Various properties of K\"{o}nig-Egerv\'{a}ry graphs can be found in
\cite{BeckBorn2018,Bhattacharya2018,Bonomo2013,JarLevMan2017,JarLevMan2018,JarLevMan2019,Korach2006,Larson2007,Larson2011,LevMan2002,LevMan2003,LevMan2012a,LevMan2012b,LevMan2013,LevManLemma2014,LevMan2019,Short2015}%
.

Let $d_{1}\leq d_{2}\leq\cdots\leq d_{n}$ be the degree sequence of a graph
$G$. Pepper \cite{Pepper2004,Pepper2009} defined the annihilation number of
$G$, denoted $h\left(  G\right)  $, to be the largest integer $k$ such that
the sum of the first $k$ terms of the degree sequence is at most half the sum
of the degrees in the sequence. In other words, $h\left(  G\right)  $ is
precisely the largest integer $k$ such that $\sum\limits_{i=1}^{k}d_{i}\leq
m\left(  G\right)  $.

Clearly, $h\left(  G\right)  =n\left(  G\right)  $ if and only if $m\left(
G\right)  =0$, while $h\left(  G\right)  =n\left(  G\right)  -1$ if and only
if $m\left(  G\right)  =1$. Moreover, for every positive integer $p$, there
exists a connected graph, namely $K_{1,p}$, having $h\left(  K_{1,p}\right)
=$ $p=n\left(  K_{1,p}\right)  -1$.

For $A\subseteq V(G)$, let $\deg(A)=\sum\limits_{v\in A}\deg(v)$. Every
$A\subseteq V\left(  G\right)  $ satisfying $\deg(A)\leq m\left(  G\right)  $
is an \textit{annihilating set}. Clearly, every independent set is
annihilating. An annihilating set $A$ is \textit{maximal} if $\deg
(A\cup\{v\})>m\left(  G\right)  $, for every vertex $v\in V(G)-A$, and it is
\textit{maximum} if $\left\vert A\right\vert =h\left(  G\right)  $
\cite{Pepper2004}. For example, if $G=K_{p,q}=(A,B,E)$ and $p>q$, then $A$ is
a maximum annihilating set, while $B$ is a maximal annihilating set.

Recall that a \textit{vertex-cover} of a graph $G$ is a subset of vertices
$W\subseteq V\left(  G\right)  $ such that $W\cap\left\{  u,v\right\}
\neq\emptyset$ holds for every $uv\in E\left(  G\right)  $. Notice that a
vertex cover need not to be independent.

\begin{lemma}
\label{lem7}Every independent vertex cover of a graph without isolated
vertices is a maximal annihilating set.
\end{lemma}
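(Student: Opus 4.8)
The plan is to denote the given independent vertex cover by $S$ and to exploit the two defining properties of $S$ simultaneously. Because $S$ is a vertex cover, every edge $uv \in E(G)$ has at least one endpoint in $S$; because $S$ is independent, no edge has \emph{both} endpoints in $S$. Hence every edge of $G$ has \textbf{exactly one} endpoint in $S$. This is the crux of the argument.

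From this observation I would compute $\deg(S)$ by a double-counting argument: when we sum $\deg(v)$ over all $v \in S$, each edge of $G$ is counted precisely once, namely at its unique endpoint lying in $S$. Therefore
\[
\deg(S) = \sum_{v \in S}\deg(v) = m\left(G\right).
\]
In particular $\deg(S) \leq m\left(G\right)$, so $S$ is an annihilating set. Note that the equality $\deg(S) = m\left(G\right)$ places $S$ exactly at the threshold, which is what will force maximality.

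For maximality, I would take an arbitrary vertex $v \in V(G) - S$ and estimate $\deg(S \cup \{v\}) = \deg(S) + \deg(v) = m\left(G\right) + \deg(v)$. Here the hypothesis that $G$ has no isolated vertices enters: it guarantees $\deg(v) \geq 1$, whence $\deg(S \cup \{v\}) \geq m\left(G\right) + 1 > m\left(G\right)$. Since $v$ was arbitrary, $S$ is a maximal annihilating set.

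This statement is essentially immediate once the "exactly one endpoint" reformulation is made, so there is no serious obstacle; the only point demanding care is recognizing \emph{why} the no-isolated-vertices assumption is needed. It is used solely in the maximality step: an isolated vertex $v$ would have $\deg(v)=0$, giving $\deg(S \cup \{v\}) = m\left(G\right)$ rather than a strict inequality, and $S$ would then fail to be maximal. I would therefore make sure to flag exactly where that hypothesis is invoked.
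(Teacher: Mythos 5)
Your proof is correct and follows the same route as the paper's: establish $\deg(S)=m(G)$ from the fact that an independent vertex cover meets each edge in exactly one endpoint, then use the absence of isolated vertices to get strict overshoot when any outside vertex is added. You simply spell out the double-counting and the role of each hypothesis more explicitly than the paper's two-line argument does.
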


\begin{proof}
Let $A$ be an independent vertex cover for a graph $G$. Since $A$ is
independent, we get that $\deg\left(  A\right)  =m\left(  G\right)  $. This
ensures that $A$ is a maximal annihilating set, because $G$ has no isolated vertices.
\end{proof}

\begin{theorem}
\cite{Pepper2004}\label{th4} For every graph $G$, $h\left(  G\right)  \geq
\max\left\{  \left\lfloor \frac{n\left(  G\right)  }{2}\right\rfloor
,\alpha\left(  G\right)  \right\}  $.
\end{theorem}

For instance, $h\left(  C_{7}\right)  =\alpha\left(  C_{7}\right)
=\left\lfloor \frac{n\left(  C_{7}\right)  }{2}\right\rfloor $, $h\left(
\overline{P_{5}}\right)  =3>\alpha\left(  \overline{P_{5}}\right)
=\left\lfloor \frac{n\left(  \overline{P_{5}}\right)  }{2}\right\rfloor $,

$h\left(  K_{2,3}\right)  =\alpha\left(  K_{2,3}\right)  >\left\lfloor
\frac{n\left(  K_{2,3}\right)  }{2}\right\rfloor $, while $h\left(
\overline{C_{6}}\right)  =\left\lfloor \frac{n\left(  \overline{C_{6}}\right)
}{2}\right\rfloor >\alpha\left(  \overline{C_{6}}\right)  $.

The relation between the annihilation number and various parameters of a graph
were studied in
\cite{amjadi2015,Arqm2018,Dehgardi2013a,Dehgardi2013b,Dehgardi2014,Desormeaux2013,Gentner2017,Jaumea2018,Pepper2004}%
.

\begin{theorem}
\label{Th7}\cite{LarsonPepper2011} For a graph $G$ with $h\left(  G\right)
\geq\frac{n\left(  G\right)  }{2}$, $\alpha\left(  G\right)  =h\left(
G\right)  $ if and only if $G$\ is a K\"{o}nig-Egerv\'{a}ry graph and every
$S\in\Omega(G)$ is a maximum annihilating set.
\end{theorem}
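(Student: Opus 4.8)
The plan is to split the equivalence into a formal part and its genuine content. The direction $(\Leftarrow)$ is immediate from the definitions: if every $S\in\Omega(G)$ is a maximum annihilating set then $\alpha(G)=|S|=h(G)$. Half of $(\Rightarrow)$ is equally immediate, since any independent set is annihilating, so a maximum independent set $S$ is an annihilating set with $|S|=\alpha(G)=h(G)$ and hence a maximum one. The whole burden is therefore the remaining assertion: \emph{if $h(G)=\alpha(G)\ge n(G)/2$ then $G$ is K\"{o}nig--Egerv\'{a}ry}. As the complement of a maximum independent set is a vertex cover of size $n(G)-\alpha(G)$, we always have $\mu(G)\le n(G)-\alpha(G)$, so it suffices to prove $\mu(G)\ge n(G)-\alpha(G)$.

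First I would record that every annihilating set has at most $h(G)$ vertices: a set $A$ with $|A|=k$ satisfies $\deg(A)\ge\sum_{i=1}^{k}d_{i}$, which exceeds $m(G)$ once $k>h(G)$. Hence it is enough to produce an annihilating set of size $n(G)-\mu(G)$, for then $n(G)-\mu(G)\le h(G)=\alpha(G)$ gives exactly $\mu(G)\ge n(G)-\alpha(G)$. I would build such a set from a maximum matching $M$. Let $U$ be the set of $M$-unsaturated vertices; it is independent (else $M$ extends) and $|U|=n(G)-2\mu(G)$. From each edge of $M$ choose the endpoint of smaller degree, collecting these into a set $L$ and the remaining larger-degree endpoints into a set $T$, so that $V(G)=U\cup L\cup T$ is a disjoint union. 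Set $A=U\cup L$; then $|A|=n(G)-\mu(G)$, and summing degrees over the three parts yields $\deg(A)=2m(G)-\deg(T)$. Thus $A$ is annihilating precisely when $\deg(T)\ge m(G)$, and establishing this inequality finishes the proof.

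So everything reduces to $\deg(T)\ge m(G)$, equivalently $e(A,A)\le e(T,T)$ after splitting the edges into those inside $A$, inside $T$, and across. Since each $t$-endpoint is at least as heavy as its partner, a crude estimate already gives $\deg(T)\ge m(G)-\tfrac{1}{2}\deg(U)$, and the task is to recover the deficit $\tfrac{1}{2}\deg(U)$ from the edges that $U$ sends into $T$. Here I would exploit the maximality of $M$ through alternating paths: an unsaturated vertex is adjacent only to saturated vertices, and the absence of an augmenting path means two distinct unsaturated vertices cannot be joined across a single matched edge; this should let me orient the labelling so that every neighbour of $U$ lands in $T$, feeding $\deg(U)$ worth of edges back into $\deg(T)$. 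The hypothesis $\alpha(G)\ge n(G)/2$, which keeps the independent set $U$ of size $n(G)-2\mu(G)$ from being too large relative to the matched part, is what I expect to convert this bookkeeping into the clean bound.

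I expect this recovery step to be the main obstacle. The troublesome configuration is a matched edge both of whose endpoints are adjacent to the \emph{same} unsaturated vertex---an odd triangle sitting on $U$---which cannot be oriented away and which is precisely what appears inside odd cycles; it is exactly here that $e(A,A)\le e(T,T)$ can fail when $\alpha(G)<n(G)/2$, as witnessed by $h(C_{7})=\alpha(C_{7})<n(C_{7})/2$. Controlling these odd substructures globally, and treating the extremal value $h(G)=n(G)/2$ with care, is where the real difficulty lies; by contrast the bipartite case is covered directly by K\"{o}nig's theorem, since there $U$ can always be matched into a maximum independent set.
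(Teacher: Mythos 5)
The paper itself offers no proof of Theorem \ref{Th7}: it is quoted from \cite{LarsonPepper2011}, so there is no in-paper argument to measure yours against. Your handling of the easy parts is correct --- the backward implication, and the half of the forward implication saying that every $S\in\Omega(G)$ is a maximum annihilating set, follow at once from $h(G)\geq\alpha(G)$ together with the fact that independent sets are annihilating --- and you have correctly located the entire content of the theorem in the implication ``$\alpha(G)=h(G)\geq n(G)/2$ implies $G$ is K\"{o}nig-Egerv\'{a}ry.''

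Your argument for that implication, however, is not a proof. First, the reduction is vacuous: producing an annihilating set of size $n(G)-\mu(G)$ amounts to showing $h(G)\geq n(G)-\mu(G)$, which under $\alpha(G)=h(G)$ is literally equivalent to $\alpha(G)+\mu(G)\geq n(G)$, i.e., to the K\"{o}nig-Egerv\'{a}ry property itself; so nothing has been reduced. What remains of the plan is the candidate set $A=U\cup L$, and the claim that it is annihilating --- equivalently $\deg(T)\geq m(G)$ --- is precisely the step you leave open (``the main obstacle,'' ``where the real difficulty lies''). Second, the sketch for closing it is internally inconsistent: $T$ is already fixed as the set of heavier endpoints of the matched edges, so you are not free to ``orient the labelling so that every neighbour of $U$ lands in $T$''; the two demands conflict whenever the $U$-neighbour on a matched edge is its lighter endpoint, and they are jointly unsatisfiable on a matched edge both of whose endpoints see the same unsaturated vertex. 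Third, nothing in the sketch uses the equality $\alpha(G)=h(G)$ --- only matching maximality and $\alpha(G)\geq n(G)/2$ appear --- yet it must be used: for $2K_{3}$ one has $h=3=n/2$ and $n-\mu=4$, while every $4$-subset has degree sum $8>6=m$, so no choice of transversal of a maximum matching yields an annihilating set of size $n-\mu$ without invoking the full hypothesis. Since the theorem's entire difficulty is concentrated in the step you defer, the proposal is a plan rather than a proof.
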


Actually, Larson and Pepper \cite{LarsonPepper2011} proved a stronger result
that reads as follows.

\begin{theorem}
Let $G$ be a graph with $h\left(  G\right)  \geq\frac{n\left(  G\right)  }{2}%
$. Then the following are equivalent:

\emph{(i)} $\alpha\left(  G\right)  =h\left(  G\right)  $;

\emph{(ii)} $G$\ is a K\"{o}nig-Egerv\'{a}ry graph and every $S\in\Omega(G)$
is a maximum annihilating set;

\emph{(iii)} $G$\ is a K\"{o}nig-Egerv\'{a}ry graph and some $S\in\Omega(G)$
is a maximum annihilating set.
\end{theorem}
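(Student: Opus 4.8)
The plan is to build on Theorem \ref{Th7}, which already records the equivalence of (i) and (ii). Since that equivalence is in hand, it suffices to weave (iii) into the chain, and the natural route is to prove the cycle (ii) $\Longrightarrow$ (iii) $\Longrightarrow$ (i); together with the given (i) $\Longleftrightarrow$ (ii) this yields the full equivalence.

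For the step (ii) $\Longrightarrow$ (iii) I expect nothing more than a triviality: every finite graph admits a maximum independent set, so $\Omega(G) \neq \emptyset$, and if \emph{every} member of $\Omega(G)$ is a maximum annihilating set then in particular \emph{some} member is; the König–Egerváry hypothesis is copied over unchanged. For the step (iii) $\Longrightarrow$ (i) the key move is to unfold the definition of a maximum annihilating set. Suppose some $S \in \Omega(G)$ is a maximum annihilating set. Being independent, $S$ is automatically an annihilating set (as noted in the preliminaries: every independent set is annihilating), so the only genuine content of ``maximum annihilating set'' here is the cardinality clause $\left\vert S\right\vert = h(G)$. On the other hand $S \in \Omega(G)$ gives $\left\vert S\right\vert = \alpha(G)$. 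Combining the two equalities yields $\alpha(G) = \left\vert S\right\vert = h(G)$, which is exactly assertion (i).

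I anticipate that there is essentially no obstacle here beyond Theorem \ref{Th7}: the added equivalence is purely definitional, resting on the observation that a set which is simultaneously a maximum independent set and a maximum annihilating set forces $\alpha(G) = h(G)$ immediately. The one point worth double-checking is that the cardinality clause defining ``maximum annihilating set'' is precisely $\left\vert A\right\vert = h(G)$, since this is what makes the two cardinality identities line up. It is also worth noting that the König–Egerváry hypothesis appearing in (iii) is not actually used in deriving (i); it is needed only to return to (ii) via Theorem \ref{Th7}, so the real arithmetic heft of the statement continues to reside in Theorem \ref{Th7} rather than in this strengthening.
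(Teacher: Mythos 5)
Your argument is correct: the paper states this theorem without proof (citing Larson and Pepper), and the only nontrivial content is the equivalence of \emph{(i)} and \emph{(ii)} already recorded as Theorem \ref{Th7}, exactly as you observe. Your two added steps --- \emph{(ii)} $\Longrightarrow$ \emph{(iii)} via $\Omega(G)\neq\emptyset$, and \emph{(iii)} $\Longrightarrow$ \emph{(i)} from the definitional identity $\alpha(G)=\left\vert S\right\vert=h(G)$ for a maximum independent set that is also a maximum annihilating set --- are sound and match the intended reading.
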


Along these lines, it was conjectured that the impacts of maximum and maximal
annihilating sets are the same.

\begin{conjecture}
\label{Conj1}\cite{LarsonPepper2011} Let $G$ be a graph with $h\left(
G\right)  \geq\frac{n\left(  G\right)  }{2}$. Then the following assertions
are equivalent:

\emph{(i)} $\alpha\left(  G\right)  =h\left(  G\right)  $;

\emph{(ii)} $G$\ is a K\"{o}nig-Egerv\'{a}ry graph and every $S\in\Omega(G)$
is a maximal annihilating set.
\end{conjecture}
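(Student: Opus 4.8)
The plan is to analyze the two implications of the claimed equivalence separately, and I anticipate that they will behave very differently. For the implication ``\emph{(i)} $\Longrightarrow$ \emph{(ii)}'' I would lean entirely on Theorem~\ref{Th7}: assuming $\alpha(G)=h(G)$ with $h(G)\geq n(G)/2$, that theorem already supplies that $G$ is König-Egerváry and that every $S\in\Omega(G)$ is a \emph{maximum} annihilating set, so all that remains is the purely combinatorial step that a maximum annihilating set is automatically maximal. To see this I would first record the size bound that every annihilating set has at most $h(G)$ vertices: if $A$ is annihilating with $\left\vert A\right\vert=k$ and $d_{1}\leq\cdots\leq d_{n}$ is the degree sequence, then $\sum_{i=1}^{k}d_{i}\leq\deg(A)\leq m(G)$, whence $k\leq h(G)$ by the definition of $h$. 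Consequently a set of size exactly $h(G)$ admits no enlargement inside the annihilating sets, i.e.\ $\deg(A\cup\{v\})>m(G)$ for every $v\in V(G)-A$, which is precisely maximality. Thus every $S\in\Omega(G)$ is a maximal annihilating set and \emph{(ii)} holds.

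For the reverse implication I would first isolate the exact gap between \emph{(ii)} and the hypothesis of Theorem~\ref{Th7}: condition \emph{(ii)} only guarantees that each $S\in\Omega(G)$ is \emph{maximal}, whereas Theorem~\ref{Th7} needs it to be \emph{maximum}, i.e.\ of size $h(G)$. Since $\alpha(G)\leq h(G)$ always holds by Theorem~\ref{th4}, the implication can fail only through a König-Egerváry graph with $h(G)\geq n(G)/2$ in which some maximum independent set is a maximal annihilating set of size $\alpha(G)<h(G)$. So rather than prove ``\emph{(ii)} $\Longrightarrow$ \emph{(i)}'' I would try to \emph{build} such a graph, and I expect this to be the crux of the whole problem.

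The obstruction to a counterexample becomes visible at once. If the vertices of a maximum independent set $S$ happen to be the $\alpha(G)$ smallest-degree vertices, then $\deg(S)$ equals the sum of the $\alpha(G)$ smallest degrees; and if in addition $S$ is maximal, every outside vertex has degree exceeding $m(G)-\deg(S)$, which pins $h(G)$ down to $\alpha(G)$. Hence a counterexample must place a \emph{high}-degree vertex inside the maximum independent set, so that the $\alpha(G)$ smallest degrees sum strictly below $\deg(S)$ and leave room beneath $m(G)$ for one further vertex. This points to the subdivided star: take a center $c$ joined to $u_{1},\dots,u_{p}$ and attach one leaf $v_{i}$ to each $u_{i}$. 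Its unique maximum independent set is $\{c,v_{1},\dots,v_{p}\}$, which is an independent vertex cover and therefore a maximal annihilating set by Lemma~\ref{lem7}; the graph is a tree, hence König-Egerváry, with $\alpha=p+1$, while the degree sequence $1^{p},2^{p},p$ yields $h=p+\lfloor p/2\rfloor$. For $p\geq4$ this gives $\alpha<h$ together with $h\geq n/2$, so \emph{(ii)} holds while \emph{(i)} fails, and letting $p$ vary produces a generic family of counterexamples.

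The genuinely hard part, then, is not the forward implication but recognizing that the conjecture is false and engineering the counterexamples: one must force a large-degree vertex into a maximum independent set while keeping that set both maximal and of maximum independence size, and simultaneously retain enough low-degree vertices to push $h(G)$ strictly above $\alpha(G)$. The final check in each family would be verifying that \emph{every} maximum independent set, and not merely the one exhibited, remains a maximal annihilating set.
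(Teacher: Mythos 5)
Your strategy coincides with the paper's: the forward implication is obtained from Theorem \ref{Th7} together with the fact that every maximum annihilating set is also maximal, and the converse is refuted by a counterexample; your spider is precisely the tree $T_{2k+1}$ of Theorem \ref{Th12} (with $p=k\geq4$), certified as a maximal annihilating set via Lemma \ref{lem7} exactly as in the paper. The one genuine difference is your proof of the ``maximum implies maximal'' step. The paper isolates this as Theorem \ref{Th8} and proves it by choosing a minimal offending index $q$ and comparing the subsequence term by term against the sorted prefix; you instead note that any annihilating set of cardinality $k$ has degree sum at least $d_{1}+\cdots+d_{k}$, so no annihilating set can have more than $h(G)$ elements, and hence a set of size exactly $h(G)$ admits no annihilating enlargement. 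Your argument is correct and substantially shorter, and it proves Theorem \ref{Th8} verbatim at the level of sequences. Two minor points you leave implicit but which are easy to supply: the uniqueness of the maximum independent set of the spider (immediate, since a $(p+1)$-element independent set must contain the center and therefore every leaf; the paper instead invokes Theorem \ref{th2}), and the verification that $h\geq n/2$, which holds for all $p\geq2$. You exhibit only one counterexample family where the paper gives several (even-order trees, bipartite graphs that are not trees, non-bipartite K\"{o}nig-Egerv\'{a}ry graphs), but a single family suffices to settle the conjecture.
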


In this paper we validate the \textquotedblright\textit{(i)} $\Longrightarrow$
\textit{(ii)}\textquotedblleft\ part of Conjecture \ref{Conj1}, while for the
converse, we provide some generic counterexamples, including trees, bipartite
graphs that are not trees, and non-bipartite K\"{o}nig-Egerv\'{a}ry graphs.
Let us notice that, if $G$ is a K\"{o}nig-Egerv\'{a}ry graph, bipartite or
non-bipartite, and $H=qK_{1}\cup G$, then $H$ inherits these properties.
Moreover, the relationship between the independence numbers and annihilation
numbers of $G$ and $H$ remains the same, because $\alpha\left(  H\right)
=\alpha\left(  G\right)  +q$ and $h\left(  H\right)  =h\left(  G\right)  +q$.
Therefore, it is enough to construct only connected counterexamples.

Finally, we hypothesize that Conjecture \ref{Conj1} is true for connected
graphs with independence number equal to three.

\section{The annihilation number of a sequence}

Let $D=\left(  d_{1},d_{2},\cdots,d_{n}\right)  $ be a sequence of real
numbers such that $d_{1}\leq d_{2}\leq\cdots\leq d_{n}$. The
\textit{annihilation number} $h\left(  D,\Theta\right)  $ of $D$ with respect
to the threshold $\Theta$ is the largest integer $k$ such that $\sum
\limits_{i=1}^{k}d_{i}\leq\Theta$. A subsequence $A$ of $D$ is
\textit{maximum} if $\left\vert A\right\vert =h\left(  D,\Theta\right)  $.
\ For a subsequence $A$ of $D$, let $\deg(A)=\sum\limits_{i\in Dom\left(
A\right)  }d_{i}$, where $Dom\left(  A\right)  $ is the set of all indexes
defining $A$. If $\deg(A)\leq\Theta$, then $A$ is an \textit{annihilating
subsequence} of $D$. An annihilating subsequence $A$ is \textit{maximal} if
$\deg(A)+d_{i}>\Theta$ for every index $i\notin Dom\left(  A\right)  $.

\begin{example}
Let $D=\left(  1,2,3,4,4\right)  $. For $\Theta=3$, we get $h\left(
D,\Theta\right)  =2$ and $A=(1,2)$ is a maximum subsequence of $D$. For
$\Theta=6$, we get $h\left(  D,\Theta\right)  =3$ and $A_{1}=(1,2,3)$ is a
maximum subsequence of $D$, while $A_{2}=(2,4)$ is a maximal non-maximum
subsequence of $D$.
\end{example}

\begin{theorem}
\label{Th8}Let $D=\left(  d_{1},d_{2},\cdots,d_{n}\right)  $ be a sequence of
real numbers such that
\[
d_{1}\leq d_{2}\leq\cdots\leq d_{n}\text{, and }h\left(  D,\Theta\right)  =k.
\]
Then every maximum annihilating subsequence of $D$ is maximal as well.
\end{theorem}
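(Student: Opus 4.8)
The plan is to argue by contradiction, exploiting the fact that the defining inequality for the annihilation number $h\left(D,\Theta\right)$ concerns precisely the $k$ \emph{smallest} entries of $D$. So first I would let $A$ be any maximum annihilating subsequence, which by definition means $\left\vert A\right\vert =h\left(D,\Theta\right)=k$ and $\deg(A)\leq\Theta$. Suppose, toward a contradiction, that $A$ is \emph{not} maximal. Then there is an index $j\notin Dom(A)$ with $\deg(A)+d_{j}\leq\Theta$.

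The crux of the argument is then to observe that extending $A$ by $j$ produces a genuinely longer annihilating subsequence and to convert this into a statement about the first $k+1$ terms of the sorted sequence. Consider the subsequence $B$ indexed by $Dom(A)\cup\{j\}$: it has $k+1$ indices and $\deg(B)=\deg(A)+d_{j}\leq\Theta$. The key observation is that, among all $(k+1)$-element index sets, the one picking out the $k+1$ smallest entries minimizes the total; since $d_{1}\leq d_{2}\leq\cdots\leq d_{n}$, this gives $\sum_{i=1}^{k+1}d_{i}\leq\deg(B)\leq\Theta$. Hence $k+1$ satisfies the condition defining $h\left(D,\Theta\right)$, so $h\left(D,\Theta\right)\geq k+1>k$, contradicting $h\left(D,\Theta\right)=k$. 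Therefore no such $j$ exists, and $A$ is maximal.

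I do not expect a serious obstacle here; the entire content of the theorem is the single minimality observation that the sum of the $k+1$ smallest terms bounds from below the sum of \emph{any} $k+1$ terms. The one point worth stating cleanly is that this monotonicity holds for arbitrary real entries, so the proof does not rely on the $d_{i}$ being nonnegative and applies verbatim to the real-sequence setting of this section (and hence, specializing $d_{i}=\deg(v_{i})$ and $\Theta=m\left(G\right)$, to the graph case). If anything, the only care needed is bookkeeping between a subsequence and its index set $Dom(\cdot)$, ensuring that ``adding an index not already present'' is exactly the failure of maximality as defined.
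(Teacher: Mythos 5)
Your proof is correct and follows essentially the same contradiction argument as the paper: both rest on the fact that the $k+1$ smallest entries minimize the sum over all $(k+1)$-element index sets, so any annihilating subsequence of size $k+1$ would force $h\left(D,\Theta\right)\geq k+1$. Your version is in fact a cleaner execution, since it avoids the paper's case split on whether the added index $q$ exceeds $k$ and its construction of an auxiliary rearranged subsequence.
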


\begin{proof}
Clearly, $A=\left(  d_{1},d_{2},\cdots,d_{k}\right)  $ is both a maximum and a
maximal subsequence of $D$, as $h\left(  D,\Theta\right)  =k$.

Let
\[
B=\left(  d_{j_{1}},d_{j_{2}},\cdots,d_{j_{k}}\right)  ,j_{1}<j_{2}%
<\cdots<j_{k}%
\]
be a maximum annihilating subsequence of $D$. Suppose, to the contrary, that
$B$ is not maximal, i.e., there exists some index $q$ such that $d_{j_{1}%
}+d_{j_{2}}+\cdots+d_{j_{k}}+d_{q}\leq\Theta$.

If $q>k$, then
\[
d_{j_{1}}+d_{j_{2}}+\cdots+d_{j_{k}}+d_{q}\geq d_{1}+d_{2}+\cdots+d_{k}%
+d_{q}>\Theta.
\]
Therefore, $q\leq k$. In what follows, without loss of generality, we may
assume that $q$ is the smallest index possible satisfying this inequality
$d_{j_{1}}+d_{j_{2}}+\cdots+d_{j_{k}}+d_{q}\leq\Theta$. Hence, we infer that
\[
d_{1}=d_{j_{1}},d_{2}=d_{j_{2}},\cdots,d_{q-1}=d_{j_{q-1}}\text{, and }q+1\leq
j_{q}.
\]

Let $C=\left(  d_{1},d_{2},\cdots d_{q-1},d_{q},d_{j_{q}},d_{j_{q+1}}%
,\cdots,d_{j_{k-1}}\right)  $. Then we have
\begin{gather*}
d_{j_{1}}+d_{j_{2}}+\cdots+d_{j_{k}}+d_{q}=\\
d_{1}+d_{2}+\cdots+d_{q-1}+\mathbf{d}_{q}+d_{j_{q}}+d_{j_{q+1}}+\cdots
+d_{j_{k-1}}+d_{j_{k}}\geq\\
d_{1}+d_{2}+\cdots+d_{q-1}+\mathbf{d}_{q}+d_{q+1}+d_{q+2}+\cdots
+d_{k}+d_{j_{k}}>\Theta,
\end{gather*}
which contradicts the assumption that $B$ is not maximal.
\end{proof}

Let us suppose that $D=\left(  d_{1},d_{2},\cdots,d_{n}\right)  $ is a degree
sequence of a graph $G$, i.e., $d_{i}=\deg(v_{i}),v_{i}\in V\left(  G\right)
$, and $\Theta=m=\left\vert E\left(  G\right)  \right\vert $. In this case,
the definition of the annihilation number of a sequence coincides with the
original Pepper's definition of the annihilation number of the graph $G$. In
what follows, we apply Theorem \ref{Th8} to graphs.

The cycle $C_{5}$ has $h\left(  C_{5}\right)  =\alpha\left(  C_{5}\right)
=\left\lfloor \frac{n\left(  C_{5}\right)  }{2}\right\rfloor $ and every of
its maximum independent sets is both a maximal and a maximum annihilating set.
Notice that $C_{5}$\ is not a K\"{o}nig-Egerv\'{a}ry graph. \begin{figure}[h]
\setlength{\unitlength}{1cm}\begin{picture}(5,1)\thicklines
\multiput(3.5,1)(2,0){2}{\circle*{0.29}}
\multiput(3.5,0)(1,0){3}{\circle*{0.29}}
\put(3.5,1){\line(1,-1){1}}
\put(3.5,0){\line(1,0){2}}
\put(3.5,0){\line(0,1){1}}
\put(4.5,0){\line(1,1){1}}
\put(5.5,0){\line(0,1){1}}
\put(2.7,0.5){\makebox(0,0){$G_{1}$}}
\multiput(8,0)(1,0){3}{\circle*{0.29}}
\multiput(8,1)(2,0){2}{\circle*{0.29}}
\put(8,0){\line(1,0){2}}
\put(8,0){\line(0,1){1}}
\put(8,0){\line(2,1){2}}
\put(8,1){\line(1,-1){1}}
\put(8,1){\line(2,-1){2}}
\put(9,0){\line(1,1){1}}
\put(10,0){\line(0,1){1}}
\put(7.2,0.5){\makebox(0,0){$G_{2}$}}
\end{picture}\caption{Non-K\"{o}nig-Egerv\'{a}ry graphs with $h\left(
G_{1}\right)  =3$ and $h\left(  G_{2}\right)  =2$.}%
\label{fig3}%
\end{figure}
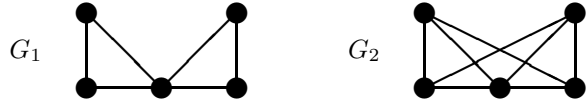

Consider the graphs from Figure \ref{fig3}. The graph $G_{1}$ has $h\left(
G_{1}\right)  >\alpha\left(  G_{1}\right)  $ and none of its maximum
independent sets is a maximal or a maximum annihilating set. The graph $G_{2}$
has $h\left(  G_{2}\right)  =\alpha\left(  G_{2}\right)  $ and each of its
maximum independent sets is both a maximal and a maximum annihilating set.
Notice that $h\left(  G_{1}\right)  >\frac{n\left(  G_{1}\right)  }{2}$, while
$h\left(  G_{2}\right)  <\frac{n\left(  G_{2}\right)  }{2}$.

Consider now the graphs from Figure \ref{fig333}. \ The graph $G_{1}$ has
$\alpha\left(  G_{1}\right)  =\frac{n\left(  G_{1}\right)  }{2}<h\left(
G_{1}\right)  $ and each of its maximum independent sets is neither a maximal
nor a maximum annihilating set. The graph $G_{2}$ has $h\left(  G_{2}\right)
=\alpha\left(  G_{2}\right)  =\frac{n\left(  G_{2}\right)  }{2}$, every of its
maximum independent sets is both a maximal and a maximum annihilating set, and
it has a maximal independent set that is a maximal non-maximum annihilating
set, namely $\left\{  a,b\right\}  $. The graph $G_{3}$ has $h\left(
G_{3}\right)  =\alpha\left(  G_{3}\right)  >\frac{n\left(  G_{3}\right)  }{2}$
and every of its maximum independent sets is both a maximal and maximum
annihilating set. The graph $G_{4}$ has $h\left(  G_{4}\right)  >\alpha\left(
G_{4}\right)  >\frac{n\left(  G_{4}\right)  }{2}$ and none of its maximum
independent sets is a maximal or maximum annihilating set.

\begin{figure}[h]
\setlength{\unitlength}{1cm}\begin{picture}(5,1.7)\thicklines
\multiput(0,1)(1,0){3}{\circle*{0.29}}
\multiput(0,0)(1,0){3}{\circle*{0.29}}
\put(0,0){\line(1,1){1}}
\put(0,0){\line(1,0){2}}
\put(0,0){\line(0,1){1}}
\put(1,0){\line(0,1){1}}
\put(1,0){\line(1,1){1}}
\put(2,0){\line(0,1){1}}
\put(1,1.5){\makebox(0,0){$G_{1}$}}
\multiput(3,0)(1,0){3}{\circle*{0.29}}
\multiput(3,1)(1,0){3}{\circle*{0.29}}
\put(3,0){\line(1,0){2}}
\put(3,0){\line(0,1){1}}
\put(4,0){\line(0,1){1}}
\put(4,0){\line(1,1){1}}
\put(5,0){\line(0,1){1}}
\qbezier(3,1)(4,1.5)(5,1)
\put(3.3,0.9){\makebox(0,0){$a$}}
\put(3.7,0.35){\makebox(0,0){$b$}}
\put(4,1.5){\makebox(0,0){$G_{2}$}}
\multiput(6,0)(1,0){4}{\circle*{0.29}}
\multiput(6,1)(1,0){3}{\circle*{0.29}}
\put(6,0){\line(1,0){3}}
\put(6,1){\line(1,-1){1}}
\put(7,0){\line(0,1){1}}
\put(7,1){\line(1,0){1}}
\put(8,1){\line(1,-1){1}}
\put(7,1.5){\makebox(0,0){$G_{3}$}}
\multiput(10,0)(1,0){4}{\circle*{0.29}}
\multiput(10,1)(1,0){4}{\circle*{0.29}}
\put(10,0){\line(1,0){3}}
\put(10,1){\line(1,-1){1}}
\put(11,0){\line(0,1){1}}
\put(11,0){\line(1,1){1}}
\put(11,1){\line(1,0){1}}
\put(11,1){\line(1,-1){1}}
\put(12,0){\line(0,1){1}}
\put(12,0){\line(1,1){1}}
\put(11.5,1.5){\makebox(0,0){$G_{4}$}}
\end{picture}\caption{K\"{o}nig-Egerv\'{a}ry graphs with $h\left(
G_{1}\right)  =h\left(  G_{3}\right)  =4$, $h\left(  G_{2}\right)  =3$,
$h\left(  G_{4}\right)  =6$.}%
\label{fig333}%
\end{figure}

\begin{theorem}
\label{Th9}Let $G$ be a graph with $h\left(  G\right)  \geq\frac{n\left(
G\right)  }{2}$. If $\alpha\left(  G\right)  =h\left(  G\right)  $, then
$G$\ is a K\"{o}nig-Egerv\'{a}ry graph and every maximum independent set is a
maximal annihilating set.
\end{theorem}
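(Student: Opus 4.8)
The plan is to obtain the statement as a direct combination of the two results already in hand, Theorem~\ref{Th7} and Theorem~\ref{Th8}. First I would invoke the forward (\emph{only if}) direction of Theorem~\ref{Th7}: under the standing hypotheses $h\left(G\right)\geq\frac{n\left(G\right)}{2}$ and $\alpha\left(G\right)=h\left(G\right)$, that theorem yields immediately that $G$ is a K\"{o}nig-Egerv\'{a}ry graph and that every $S\in\Omega(G)$ is a \emph{maximum} annihilating set. This already secures the first half of the desired conclusion (the K\"{o}nig-Egerv\'{a}ry property) and tells us that each maximum independent set realizes the extremal cardinality $h\left(G\right)$ as an annihilating set.

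The second step is to promote ``maximum annihilating'' to ``maximal annihilating'', and this is exactly what Theorem~\ref{Th8} supplies once it is read in the graph setting indicated in the paragraph following its proof. Taking $D$ to be the degree sequence of $G$ arranged nondecreasingly and $\Theta=m\left(G\right)$, a maximum annihilating set $S$ corresponds to a subsequence of $D$ with $\left\vert S\right\vert=h\left(G\right)$ terms whose sum is at most $m\left(G\right)$, i.e.\ to a maximum annihilating subsequence in the sense of Section~2. The maximality condition for sets, namely $\deg(S\cup\{v\})>m\left(G\right)$ for every $v\in V(G)-S$, translates term by term into the sequence condition $\deg(A)+d_{i}>\Theta$ for every index $i\notin Dom\left(A\right)$. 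Hence Theorem~\ref{Th8} applies and certifies that every such maximum annihilating subsequence is maximal; transporting this back, every $S\in\Omega(G)$ is a maximal annihilating set, which completes the proof.

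I do not expect a genuine obstacle here, because the combinatorial content has already been isolated in Theorem~\ref{Th8}; the implication is essentially a corollary of Theorems~\ref{Th7} and~\ref{Th8} chained together. The only point demanding a line of care is the faithful passage between the sequence formalism of Theorem~\ref{Th8} and the vertex-set formalism of the present statement, specifically verifying that a maximum independent set $S$, viewed through its degrees, is indeed a maximum annihilating \emph{sub\-sequence} of $D$ (not merely an annihilating set of size $h\left(G\right)$), so that the hypothesis of Theorem~\ref{Th8} is literally met. Once this identification is recorded, nothing further is required.
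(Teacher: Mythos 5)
Your proposal is correct and follows exactly the paper's own argument: apply Theorem~\ref{Th7} to obtain the K\"{o}nig-Egerv\'{a}ry property and that every $S\in\Omega(G)$ is a maximum annihilating set, then apply Theorem~\ref{Th8} (with $D$ the degree sequence and $\Theta=m(G)$) to upgrade maximum to maximal. Your extra care about the passage between the sequence and set formalisms is the same identification the paper records in the paragraph following the proof of Theorem~\ref{Th8}.
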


\begin{proof}
Since $h\left(  G\right)  \geq\frac{n\left(  G\right)  }{2}$ and
$\alpha\left(  G\right)  =h\left(  G\right)  $, Theorem \ref{Th7} ensures that
$G$\ is a K\"{o}nig-Egerv\'{a}ry graph and each maximum independent set is a
maximum annihilating set. Hence, in accordance with Theorem \ref{Th8}, we
infer that every maximum independent set is a maximal annihilating set as well.
\end{proof}

Thus Conjecture \ref{Conj1} is half-verified.

\section{Tree counterexamples}

Recall that if a graph $G$ has a unique maximum independent set, say $S$, and
$V\left(  G\right)  -S$\ is also an independent set, then $G$ is a
\textit{strong unique maximum independence graph} \cite{Hop1984}.

\begin{theorem}
\cite{Hop1984} \label{th2}\emph{(i)} A tree is a \textit{strong unique maximum
independence tree (graph) if and only if the distance between any two leaves
is even.}

\emph{(ii)} A connected graph is a strong unique maximum independence graph if
and only if it is bipartite and has a spanning tree which is a \textit{strong
unique maximum independence tree (graph). }
\end{theorem}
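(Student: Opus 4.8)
The plan is to reduce everything to the bipartition structure of trees and to a strict form of Hall's condition. Throughout, write $(A,B)$ for the (unique) bipartition of the connected tree or connected bipartite graph under consideration, and let $N_G(S)$ (resp.\ $N_T(S)$) denote the set of vertices adjacent to some vertex of $S$. Recall that in a connected bipartite graph two vertices lie in the same colour class precisely when the distance between them is even, so the condition \emph{``the distance between any two leaves is even''} is equivalent to \emph{``all leaves lie in one colour class''}, say $A$. For the direction of \emph{(i)} assuming $T$ is a strong unique maximum independence tree, let $S$ be its unique maximum independent set with $V(T)\setminus S$ independent; then every edge joins $S$ to $V(T)\setminus S$, so $\{S,V(T)\setminus S\}$ is a bipartition and, by uniqueness of the bipartition of a connected tree, $S$ is a colour class, say $S=A$. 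If some leaf $\ell$ lay in $B$, its unique neighbour $v$ would lie in $S$ (as $S$ is maximal), and $(S\setminus\{v\})\cup\{\ell\}$ would be a second maximum independent set, contradicting uniqueness. Hence all leaves lie in $A$, i.e.\ their pairwise distances are even.

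For the converse in \emph{(i)}, assume all leaves lie in $A$, so every vertex of $B$ has degree at least $2$. First I would establish the strict Hall condition $|N_T(S)|\ge |S|+1$ for every nonempty $S\subseteq B$. Indeed, the induced forest $F=T[S\cup N_T(S)]$ has all its edges between $S$ and $N_T(S)$, and each $s\in S$ keeps its full degree $\deg_T(s)\ge 2$ in $F$ (all neighbours of $s$ lie in $A\cap N_T(S)$); thus $|E(F)|=\sum_{s\in S}\deg_F(s)\ge 2|S|$, while $F$ being a forest gives $|E(F)|\le |S|+|N_T(S)|-1$, and combining yields $|N_T(S)|\ge |S|+1$. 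Now for any independent set $I$ meeting $B$, writing $I_B=I\cap B\ne\emptyset$, independence forces $I\cap A\subseteq A\setminus N_T(I_B)$, whence $|I|\le |A|-|N_T(I_B)|+|I_B|\le |A|-1$. Therefore $A$ is the unique maximum independent set and its complement $B$ is independent, so $T$ is a strong unique maximum independence tree.

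For \emph{(ii)}, the backward implication is the easy one: if a connected bipartite $G$ has a spanning tree $T$ that is a strong unique maximum independence tree, then by \emph{(i)} the unique maximum independent set of $T$ is a colour class $A$, which is also a colour class of $G$ since $T$ is spanning. As $T\subseteq G$ we have $\alpha(G)\le \alpha(T)=|A|$, while $A$ is independent in $G$, so $\alpha(G)=|A|$; any maximum independent set of $G$ is also one of $T$ and hence equals $A$, and $B=V(G)\setminus A$ is independent, so $G$ is a strong unique maximum independence graph. For the forward implication, a strong unique maximum independence graph $G$ is bipartite (its unique maximum independent set $A$ and its independent complement $B$ form a bipartition), and since $A$ is the unique maximum independent set, for every nonempty $S\subseteq B$ the independent set $S\cup(A\setminus N_G(S))$ has size smaller than $|A|$, giving the strict Hall condition $|N_G(S)|\ge |S|+1$. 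It then suffices to produce a spanning tree $T$ of $G$ in which no vertex of $B$ is a leaf, for such a tree has all its leaves in $A$ and is therefore, by \emph{(i)}, a strong unique maximum independence tree.

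The main obstacle is exactly this last step: showing that a connected bipartite graph satisfying the strict Hall condition for $B$ admits a spanning tree with every vertex of $B$ internal. The graph $K_{2,2}$ shows that merely having $\deg_G(b)\ge 2$ for all $b\in B$ is insufficient (it fails strict Hall, and every spanning path has a $B$-endpoint), so the construction must genuinely exploit the strict inequality. My plan is induction on $|V(G)|$: choose $b\in B$ with two neighbours $a_1,a_2$, delete $b$ and identify $a_1$ with $a_2$, apply the inductive hypothesis to the smaller connected bipartite graph, and lift its spanning tree by re-expanding the identified vertex through $b$ so that $b$ acquires two tree-edges. The delicate point, and the crux of the whole argument, is that identification can destroy strictness for subsets $S\subseteq B\setminus\{b\}$ adjacent to both $a_1$ and $a_2$; I would control this by selecting $b,a_1,a_2$ relative to a minimal tight set (using submodularity of $S\mapsto |N_G(S)|$), so that no tight set straddles $a_1$ and $a_2$ and the strict Hall condition survives the reduction.
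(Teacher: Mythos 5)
The paper itself offers no proof of this theorem --- it is quoted from Hopkins and Staton \cite{Hop1984} --- so there is no in-paper argument to compare you against; your attempt has to stand on its own. On that basis, your part \emph{(i)} is complete and correct in both directions: the exchange $(S\setminus\{v\})\cup\{\ell\}$ kills any putative $B$-leaf, and the forest count $2|S|\le|E(F)|\le|S|+|N_T(S)|-1$ cleanly yields the strict Hall condition $|N_T(S)|\ge|S|+1$ and hence the uniqueness of $A$ together with the independence of its complement. The backward implication of \emph{(ii)} is likewise complete, and your reduction of the forward implication to producing a spanning tree with no leaf in $B$ is the right move.

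The genuine gap is exactly where you flag it. The forward half of \emph{(ii)} rests on the claim that a connected bipartite $G=(A,B,E)$ with $|N_G(S)|\ge|S|+1$ for every nonempty $S\subseteq B$ admits a spanning tree in which every vertex of $B$ is internal, and for this you give a plan, not a proof. The claim is true --- it is in essence the classical ``positive surplus'' lemma: such a $G$ contains a forest $F$ with $\deg_F(b)=2$ for every $b\in B$, and extending $F$ to a spanning tree of the connected graph $G$ only increases degrees, so it finishes the job via \emph{(i)} --- but it is genuinely nontrivial (your own $K_{2,2}$ remark shows the full strict condition must be exploited), and the specific reduction you propose (delete $b$, identify two of its neighbours $a_1,a_2$) is not obviously repairable: you must exclude tight sets $S$ with $|N(S)|=|S|+1$ that see both $a_1$ and $a_2$, and nothing in your sketch guarantees that a suitable triple $b,a_1,a_2$ exists. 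The workable version of your idea contracts more: if some nonempty proper $S^*\subseteq B$ is tight, then $G[S^*\cup N(S^*)]$ inherits the hypothesis and, by induction, carries a forest with all $S^*$-degrees equal to $2$, which by an edge count is forced to be a spanning tree of $S^*\cup N(S^*)$; contracting this whole block to a single $A$-vertex preserves the strict condition (by submodularity of $S\mapsto|N(S)|$, or directly from $|N(S'\cup S^*)|\ge|S'|+|S^*|+1$), and the two pieces glue into the desired forest. The case in which no proper tight set exists still needs a separate argument. Until that lemma is proved or properly cited, the forward direction of \emph{(ii)} remains open in your write-up.
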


\begin{figure}[h]
\setlength{\unitlength}{1cm}\begin{picture}(5,1.5)\thicklines
\multiput(3,0.5)(1,0){8}{\circle*{0.29}}
\multiput(3,1.5)(1,0){9}{\circle*{0.29}}
\put(3,0.5){\line(0,1){1}}
\put(3,0.5){\line(1,1){1}}
\put(4,0.5){\line(0,1){1}}
\multiput(4,0.5)(0.2,0){5}{\circle*{0.07}}
\put(5,0.5){\line(0,1){1}}
\put(5,0.5){\line(1,1){1}}
\put(6,0.5){\line(0,1){1}}
\put(6,0.5){\line(1,1){1}}
\put(7,0.5){\line(0,1){1}}
\put(7,0.5){\line(4,1){4}}
\put(7,1.5){\line(1,-1){1}}
\put(7,1.5){\line(2,-1){2}}
\put(7,1.5){\line(3,-1){3}}
\put(8,0.5){\line(1,1){1}}
\put(8,1.5){\line(1,-1){1}}
\put(10,0.5){\line(0,1){1}}
\put(3,1.85){\makebox(0,0){$x_{k}$}}
\put(4,1.85){\makebox(0,0){$x_{k-1}$}}
\put(5,1.85){\makebox(0,0){$x_{2}$}}
\put(6,1.85){\makebox(0,0){$x_{1}$}}
\put(3,0.15){\makebox(0,0){$y_{k}$}}
\put(4,0.15){\makebox(0,0){$y_{k-1}$}}
\put(5,0.15){\makebox(0,0){$y_{2}$}}
\put(6,0.15){\makebox(0,0){$y_{1}$}}
\put(7,1.85){\makebox(0,0){$a_{5}$}}
\put(8,1.85){\makebox(0,0){$a_{4}$}}
\put(9,1.85){\makebox(0,0){$a_{3}$}}
\put(10,1.85){\makebox(0,0){$a_{2}$}}
\put(11,1.85){\makebox(0,0){$a_{1}$}}
\put(7,0.15){\makebox(0,0){$b_{4}$}}
\put(8,0.15){\makebox(0,0){$b_{3}$}}
\put(9,0.15){\makebox(0,0){$b_{2}$}}
\put(10,0.15){\makebox(0,0){$b_{1}$}}
\put(2,1){\makebox(0,0){$T_{k}$}}
\end{picture}\caption{Strong unique maximum independence trees $T_{k},k\geq
1$.}%
\label{fig122}%
\end{figure}
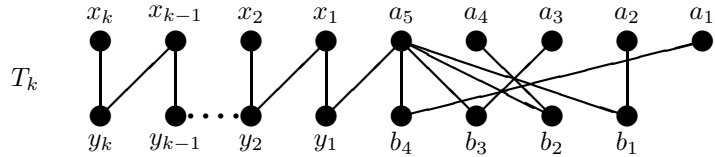Consider the trees from Figures \ref{fig55} and \ref{fig15}.

\begin{itemize}
\item $T_{1}$ has $m\left(  T_{1}\right)  =7$, the degree sequence $\left(
1,1,1,1,2,2,3,3\right)  $, $h\left(  T_{1}\right)  =5=\alpha\left(
T_{1}\right)  $, $\Omega\left(  T_{1}\right)  =\left\{  S_{1},S_{2}\right\}  $
, where $S_{1}=\left\{  v_{1},v_{2},v_{4},v_{7},v_{8}\right\}  $,
$S_{1}=\left\{  v_{1},v_{2},v_{5},v_{7},v_{8}\right\}  $, and $\deg\left(
S_{1}\right)  =\deg\left(  S_{2}\right)  =6$. Hence, every maximum independent
set of $T_{1}$ is both a maximal and a maximum annihilating set.

\item $T_{2}$ has $m\left(  T_{2}\right)  =5$, the degree sequence $\left(
1,1,1,2,2,3\right)  $, $h\left(  T_{2}\right)  =4>\alpha\left(  T_{2}\right)
$, $\Omega\left(  T_{2}\right)  =\left\{  S_{1},S_{2},S_{3},S_{4}%
,S_{5}\right\}  $, where $S_{1}=\left\{  a_{1},a_{2},a_{3}\right\}  $ has
$\deg\left(  S_{1}\right)  =5$, $S_{2}=\left\{  b_{1},b_{2},b_{3}\right\}  $
has $\deg\left(  S_{2}\right)  =5$, $S_{3}=\left\{  a_{1},a_{2},b_{3}\right\}
$ has $\deg\left(  S_{3}\right)  =4$, $S_{4}=\left\{  a_{1},b_{2}%
,a_{3}\right\}  $ has $\deg\left(  S_{4}\right)  =4$, $S_{5}=\left\{
a_{1},b_{2},b_{3}\right\}  $ has $\deg\left(  S_{5}\right)  =3$. Consequently,
only $S_{1}$ and $S_{2}$ are maximal annihilating sets belonging to
$\Omega\left(  T_{2}\right)  $.
\end{itemize}

\begin{figure}[h]
\setlength{\unitlength}{1cm}\begin{picture}(5,1.2)\thicklines
\multiput(3,0)(1,0){4}{\circle*{0.29}}
\multiput(3,1)(1,0){4}{\circle*{0.29}}
\put(3,0){\line(0,1){1}}
\put(3,0){\line(1,1){1}}
\put(3,0){\line(1,0){3}}
\put(5,1){\line(1,-1){1}}
\put(6,0){\line(0,1){1}}
\put(2.65,1){\makebox(0,0){$v_{1}$}}
\put(3.65,1){\makebox(0,0){$v_{2}$}}
\put(2.65,0){\makebox(0,0){$v_{3}$}}
\put(4,0.35){\makebox(0,0){$v_{4}$}}
\put(5,0.35){\makebox(0,0){$v_{5}$}}
\put(6.35,0){\makebox(0,0){$v_{6}$}}
\put(5.35,1){\makebox(0,0){$v_{7}$}}
\put(6.35,1){\makebox(0,0){$v_{8}$}}
\put(2.1,0.5){\makebox(0,0){$T_{1}$}}
\multiput(8.5,0)(1,0){3}{\circle*{0.29}}
\multiput(8.5,1)(1,0){3}{\circle*{0.29}}
\put(8.5,0){\line(0,1){1}}
\put(8.5,0){\line(1,1){1}}
\put(8.5,0){\line(2,1){2}}
\put(9.5,0){\line(0,1){1}}
\put(10.5,0){\line(0,1){1}}
\put(8.15,1){\makebox(0,0){$a_{1}$}}
\put(8.15,0){\makebox(0,0){$b_{1}$}}
\put(9.85,1){\makebox(0,0){$a_{2}$}}
\put(9.85,0){\makebox(0,0){$b_{2}$}}
\put(10.85,1){\makebox(0,0){$a_{3}$}}
\put(10.85,0){\makebox(0,0){$b_{3}$}}
\put(7.5,0.5){\makebox(0,0){$T_{2}$}}
\end{picture}\caption{Trees with $\alpha\left(  T_{1}\right)  =5$ and
$\alpha\left(  T_{2}\right)  =3$.}%
\label{fig55}%
\end{figure}

\begin{itemize}
\item $T_{3}$ has $m\left(  T_{3}\right)  =10$, the degree sequence $\left(
1,1,1,1,1,1,2,2,3,3,4\right)  $, $h\left(  T_{3}\right)  =8>\alpha\left(
T_{3}\right)  $, $\Omega\left(  T_{3}\right)  =\left\{  S\right\}  $, where
$S=\left\{  x_{1},x_{2},x_{3},x_{4},x_{5},x_{6},x_{7}\right\}  $ has
$\deg\left(  S\right)  =8$. Thus $S$ is a (unique) maximum independent set but
not a maximal annihilating set.
\end{itemize}

\begin{figure}[h]
\setlength{\unitlength}{1cm}\begin{picture}(5,1.2)\thicklines
\multiput(5,0)(1,0){6}{\circle*{0.29}}
\multiput(5,1)(1,0){5}{\circle*{0.29}}
\put(5,0){\line(1,0){3}}
\put(9,0){\line(1,0){1}}
\put(5,1){\line(1,-1){1}}
\put(6,1){\line(1,-1){1}}
\put(7,0){\line(0,1){1}}
\put(8,0){\line(0,1){1}}
\put(9,0){\line(0,1){1}}
\put(8,0){\line(1,1){1}}
\put(4.62,0){\makebox(0,0){$x_{1}$}}
\put(4.62,1){\makebox(0,0){$x_{2}$}}
\put(5.62,1){\makebox(0,0){$x_{3}$}}
\put(6.62,1){\makebox(0,0){$x_{4}$}}
\put(8.37,1){\makebox(0,0){$x_{5}$}}
\put(9.37,1){\makebox(0,0){$x_{6}$}}
\put(10.37,0){\makebox(0,0){$x_{7}$}}
\put(3.8,0.5){\makebox(0,0){$T_{3}$}}
\end{picture}\caption{A tree with $\alpha\left(  T_{3}\right)  =7$.}%
\label{fig15}%
\end{figure}
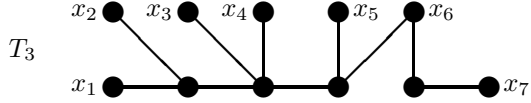

\begin{theorem}
\label{Th12}There exist a tree of order $2k+1,k\geq4$ and a tree of order
$2k+4,k\geq3$, satisfying the following conditions:

\begin{itemize}
\item $h\left(  T\right)  \geq\frac{n\left(  T\right)  }{2}$,

\item each $S\in\Omega\left(  T\right)  $ is a maximal non-maximum
annihilating set.
\end{itemize}
\end{theorem}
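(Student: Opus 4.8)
The plan is to prove the statement by explicit construction, producing one parametrized family of trees for each prescribed order and checking the two bullets directly. The key simplification is that, once the tree is chosen to be a \emph{strong unique maximum independence tree}, the maximality requirement becomes automatic. Indeed, if such a $T$ has unique maximum independent set $S$, then by definition $V(T)-S$ is independent as well, so $\{S,V(T)-S\}$ is the (unique) bipartition of $T$ and $S$ is its larger class; in particular $S$ is an independent vertex cover, $\deg(S)=m(T)$, and, as a tree has no isolated vertices, Lemma \ref{lem7} shows $S$ is a maximal annihilating set. Since then $\Omega(T)=\{S\}$, the clause ``every $S\in\Omega(T)$ is a maximal annihilating set'' is free. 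Moreover $h(T)\ge\alpha(T)$ by Theorem \ref{th4}, and our $S$ will satisfy $\alpha(T)=|S|\ge n(T)/2$, so the hypothesis $h(T)\ge n(T)/2$ holds automatically too. Hence the entire content reduces to a single inequality: arranging $\alpha(T)<h(T)$, which makes $S$ a \emph{non-maximum} annihilating set.

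First I would treat the odd orders with the spider $T$ in which a center $z$ is joined to the midpoint of each of $k$ paths of length two, i.e.\ $z-u_i-w_i$ for $1\le i\le k$; here $n(T)=2k+1$ and $m(T)=2k$. Its only leaves are the $w_i$, pairwise at distance $4$, so Theorem \ref{th2} certifies that $T$ is a strong unique maximum independence tree with larger class $S=\{z,w_1,\dots,w_k\}$ and $\alpha(T)=k+1$. The degree sequence is $k$ ones, $k$ twos and a single $k$, and greedily summing the smallest terms up to the threshold $m(T)=2k$ gives $h(T)=k+\lfloor k/2\rfloor$. Thus $h(T)>\alpha(T)$ exactly when $\lfloor k/2\rfloor\ge2$, that is for $k\ge4$, which is the claimed range.

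For the even orders I would shift the order by three while keeping all leaf-distances even: start from the spider on $k+1$ legs of length two and attach one extra pendant leaf $\ell$ to a single midpoint, say $u_1$. Now $n(T)=2k+4$, $m(T)=2k+3$, and every pair of leaves is still at even distance, since $\ell$ lies at distance $2$ from $w_1$ and $4$ from every other leaf; so $T$ is again a strong unique maximum independence tree, with larger class $S=\{z,\ell,w_1,\dots,w_{k+1}\}$ and $\alpha(T)=k+3$. The degree sequence is now $k+2$ ones, $k$ twos, a single $3$ and a single $k+1$, and the same greedy summation against $m(T)=2k+3$ yields $h(T)=(k+2)+\lfloor(k+1)/2\rfloor$. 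Hence $h(T)>\alpha(T)$ precisely when $\lfloor(k+1)/2\rfloor\ge2$, i.e.\ for $k\ge3$, as required. In both families the reduction of the first paragraph then upgrades this single inequality to the full conclusion.

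The step I expect to demand the most care is purely combinatorial bookkeeping rather than any structural difficulty: first, checking that the constructed trees genuinely are strong unique maximum independence trees via the even-leaf-distance criterion of Theorem \ref{th2} (this is exactly the property that fails for $T_3$, where $V(T_3)-S$ carries internal edges and $S$ is not maximal annihilating), and second, evaluating $h(T)$ exactly through the greedy summation and recognizing that the threshold inequality $\lfloor\,\cdot\,\rfloor\ge2$ is what pins the bounds to $k\ge4$ and $k\ge3$. I would double-check the boundary instances $k=4$ and $k=3$ by hand, since these are precisely where $h(T)$ only just exceeds $\alpha(T)$.
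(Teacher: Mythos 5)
Your proposal is correct and follows essentially the same route as the paper: the odd-order tree is exactly the paper's spider $T_{2k+1}$, your even-order tree is isomorphic to the paper's $T_{2k+4}$, and in both cases you invoke the even-leaf-distance criterion of Theorem \ref{th2}(i) to get a strong unique maximum independence tree whose unique maximum independent set is an independent vertex cover, then apply Lemma \ref{lem7} for maximality and the greedy degree-sequence computation of $h$ for non-maximality. The only difference is presentational — you state the reduction via Lemma \ref{lem7} once as a general principle rather than repeating it for each family.
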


\begin{proof}
Let us consider the trees from Figure \ref{fig71}.

The tree $T_{2k+1},k\geq4$, has $n\left(  T_{2k+1}\right)  =2k+1$, $m\left(
T_{2k+1}\right)  =2k$, $\alpha\left(  T_{2k+1}\right)  =k+1$ and the degree
sequence%
\[
\underset{k}{\underbrace{1,\cdots,1}},\underset{k}{\underbrace{2,\cdots,2}%
},k.
\]
Consequently, we infer that $h\left(  T_{2k+1}\right)  =k+\left\lfloor
\frac{k}{2}\right\rfloor >\max\left\{  \frac{n\left(  T_{2k+1}\right)  }%
{2},\alpha\left(  T_{2k+1}\right)  \right\}  $. According to Theorem
\ref{th2}\emph{(i)}, $T_{2k+1}$ is a strong unique maximum independence
tree\textit{. }Hence, $\Omega\left(  T_{2k+1}\right)  =\left\{  S\right\}  $,
where $S=\left\{  a_{1},...,a_{k},v\right\}  $. Since $S$ is an independent
vertex cover, by Lemma \ref{lem7}, we conclude that each maximum independent
set of $T_{2k+1}$ is a maximal non-maximum annihilating set. \begin{figure}[h]
\setlength{\unitlength}{1cm}\begin{picture}(5,2,5)\thicklines
\put(3,1){\circle*{0.29}}
\multiput(4,0)(0,1){3}{\circle*{0.29}}
\multiput(5,0)(0,1){3}{\circle*{0.29}}
\multiput(4,1)(0,0.15){7}{\circle*{0.08}}
\multiput(5,1)(0,0.15){7}{\circle*{0.08}}
\put(3,1){\line(1,0){2}}
\put(3,1){\line(1,1){1}}
\put(3,1){\line(1,-1){1}}
\put(4,0){\line(1,0){1}}
\put(4,1){\line(1,0){1}}
\put(4,2){\line(1,0){1}}
\put(4.35,0.3){\makebox(0,0){$b_{1}$}}
\put(4.35,1.3){\makebox(0,0){$b_{2}$}}
\put(4.35,2.3){\makebox(0,0){$b_{k}$}}
\put(5.38,0){\makebox(0,0){$a_{1}$}}
\put(5.38,1){\makebox(0,0){$a_{2}$}}
\put(5.38,2){\makebox(0,0){$a_{k}$}}
\put(2.8,1.3){\makebox(0,0){$v$}}
\put(2,1){\makebox(0,0){$T_{2k+1}$}}
\multiput(7,0)(0,2){2}{\circle*{0.29}}
\multiput(8,1)(1,0){2}{\circle*{0.29}}
\multiput(10,0)(0,1){3}{\circle*{0.29}}
\multiput(11,0)(0,1){3}{\circle*{0.29}}
\multiput(10,1)(0,0.15){7}{\circle*{0.08}}
\multiput(11,1)(0,0.15){7}{\circle*{0.08}}
\put(7,2){\line(1,-1){1}}
\put(7,0){\line(1,1){1}}
\put(8,1){\line(1,0){1}}
\put(9,1){\line(1,1){1}}
\put(9,1){\line(1,0){1}}
\put(9,1){\line(1,-1){1}}
\put(10,0){\line(1,0){1}}
\put(10,1){\line(1,0){1}}
\put(10,2){\line(1,0){1}}
\put(10.35,0.3){\makebox(0,0){$b_{1}$}}
\put(10.35,1.3){\makebox(0,0){$b_{2}$}}
\put(10.35,2.3){\makebox(0,0){$b_{k}$}}
\put(11.38,0){\makebox(0,0){$a_{1}$}}
\put(11.38,1){\makebox(0,0){$a_{2}$}}
\put(11.38,2){\makebox(0,0){$a_{k}$}}
\put(7.38,2){\makebox(0,0){$v_{2}$}}
\put(7.38,0){\makebox(0,0){$v_{1}$}}
\put(7.65,1){\makebox(0,0){$v_{4}$}}
\put(8.8,1.3){\makebox(0,0){$v_{3}$}}
\put(6.5,1){\makebox(0,0){$T_{2k+4}$}}
\end{picture}\caption{Odd and even tree counterexamples.}%
\label{fig71}%
\end{figure}
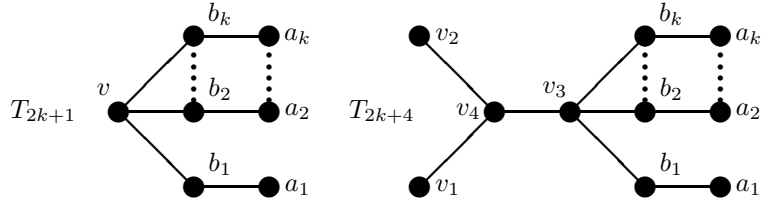

The tree $T_{2k+4},k\geq3$, has $n\left(  T_{2k+4}\right)  =2k+4$, $m\left(
T_{2k+4}\right)  =2k+3$, $\alpha\left(  T_{2k+1}\right)  =k+3$ and the degree
sequence
\[
\underset{k+2}{\underbrace{1,\cdots,1}},\underset{k}{\underbrace{2,\cdots,2}%
},3,k+1.
\]

Consequently, we infer that $h\left(  T_{2k+1}\right)  =k+2+\left\lfloor
\frac{k+1}{2}\right\rfloor >\max\left\{  \frac{n\left(  T_{2k+4}\right)  }%
{2},\alpha\left(  T_{2k+4}\right)  \right\}  $. According to Theorem
\ref{th2}\emph{(i)}, $T_{2k+4}$ is a strong unique maximum independence
tree\textit{. }Hence, $\Omega\left(  T_{2k+4}\right)  =\left\{  S\right\}  $,
where $S=\left\{  a_{1},...,a_{k},v_{1},v_{2},v_{3}\right\}  $. Since $S$ is
an independent vertex cover, by Lemma \ref{lem7}, we conclude that each
maximum independent set of $T_{2k+4}$ is a maximal non-maximum annihilating set.
\end{proof}

\section{Bipartite counterexamples that are not trees}

\begin{lemma}
\label{lem5}The bipartite graph $G_{k},k\geq1$, from Figure \ref{fig5555} has
$\alpha\left(  G_{k}\right)  =k+4$ and each $S\in\Omega\left(  G_{k}\right)  $
includes either $A_{0}=\left\{  a_{i}:i=1,...,4\right\}  $ or $B_{0}=\left\{
b_{i}:i=1,...,4\right\}  $.\textit{ Moreover, }every $S\in\Omega\left(
G_{k}\right)  $ but one contains $x_{k}$, and $\deg\left(  S\right)
\in\left\{  2k+11,2k+12\right\}  $.\begin{figure}[h]
\setlength{\unitlength}{1cm}\begin{picture}(5,2.2)\thicklines
\multiput(4,0.5)(1,0){8}{\circle*{0.29}}
\multiput(4,1.5)(1,0){8}{\circle*{0.29}}
\put(4,0.5){\line(0,1){1}}
\put(4,0.5){\line(1,1){1}}
\put(5,0.5){\line(0,1){1}}
\multiput(5,0.5)(0.2,0){5}{\circle*{0.07}}
\put(6,0.5){\line(0,1){1}}
\put(6,0.5){\line(1,1){1}}
\put(7,0.5){\line(0,1){1}}
\put(7,0.5){\line(1,1){1}}
\put(8,0.5){\line(0,1){1}}
\put(8,0.5){\line(1,1){1}}
\put(8,0.5){\line(2,1){2}}
\put(8,0.5){\line(3,1){3}}
\put(8,1.5){\line(1,-1){1}}
\put(8,1.5){\line(2,-1){2}}
\put(8,1.5){\line(3,-1){3}}
\put(9,0.5){\line(0,1){1}}
\put(9,0.5){\line(1,1){1}}
\put(9,0.5){\line(2,1){2}}
\put(9,1.5){\line(1,-1){1}}
\put(11,0.5){\line(0,1){1}}
\put(4,1.85){\makebox(0,0){$x_{k}$}}
\put(5,1.85){\makebox(0,0){$x_{k-1}$}}
\put(6,1.85){\makebox(0,0){$x_{2}$}}
\put(7,1.85){\makebox(0,0){$x_{1}$}}
\put(4,0.15){\makebox(0,0){$y_{k}$}}
\put(5,0.15){\makebox(0,0){$y_{k-1}$}}
\put(6,0.15){\makebox(0,0){$y_{2}$}}
\put(7,0.15){\makebox(0,0){$y_{1}$}}
\put(8,1.85){\makebox(0,0){$a_{4}$}}
\put(9,1.85){\makebox(0,0){$a_{3}$}}
\put(10,1.85){\makebox(0,0){$a_{2}$}}
\put(11,1.85){\makebox(0,0){$a_{1}$}}
\put(8,0.15){\makebox(0,0){$b_{4}$}}
\put(9,0.15){\makebox(0,0){$b_{3}$}}
\put(10,0.15){\makebox(0,0){$b_{2}$}}
\put(11,0.15){\makebox(0,0){$b_{1}$}}
\put(3,1){\makebox(0,0){$G_{k}$}}
\end{picture}\caption{A bipartite graph with $\alpha\left(  G_{k}\right)
=k+4,k\geq0$.}%
\label{fig5555}%
\end{figure}
\end{lemma}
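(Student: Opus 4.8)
The plan is to exploit the fact that $G_k$ splits into two almost independent pieces joined by a single edge. First I would observe that $x_k,y_k,x_{k-1},y_{k-1},\dots,x_1,y_1$ form an induced path $P$ on $2k$ vertices (each $y_i$ with $i\ge 2$ being adjacent exactly to $x_i$ and $x_{i-1}$, while $y_1$ is adjacent to $x_1$), whereas $a_1,\dots,a_4,b_1,\dots,b_4$ induce a bipartite ``core'' $K$; the only edge joining $P$ and $K$ is $y_1a_4$. I would also record the degrees: $\deg(x_k)=1$, every other vertex of $P$ has degree $2$, and within $K$ one has $\deg(a_1)=\deg(a_3)=3$, $\deg(a_2)=2$, $\deg(a_4)=5$ (counting the edge $y_1a_4$), $\deg(b_1)=\deg(b_2)=2$, $\deg(b_3)=\deg(b_4)=4$, so that $\deg(A_0)=13$, $\deg(B_0)=12$, and $m(G_k)=2k+12$.

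For $\alpha(G_k)=k+4$ I would note that $G_k$ is bipartite with parts $\{x_i\}\cup\{a_i\}$ and $\{y_i\}\cup\{b_i\}$, each of size $k+4$, and exhibit a perfect matching, namely $x_iy_i$ for all $i$ together with a perfect matching of $K$ such as $a_1b_1,a_2b_4,a_3b_2,a_4b_3$; König's theorem then gives $\alpha=n-\mu=k+4$. Since $\alpha(P)=k$ and $\alpha(K)=4$, any $S\in\Omega(G_k)$ must meet $P$ in a maximum independent set of the path and $K$ in a maximum independent set of the core.

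The heart of the argument is classifying the maximum independent sets of $K$. I would argue that $a_4,b_3,b_4$ are each adjacent to all four vertices of the opposite side, so a size-$4$ independent set containing any of them is forced to be $A_0$ or $B_0$; otherwise the set lives inside $\{a_1,a_2,a_3,b_1,b_2\}$, whose induced graph is two disjoint edges ($a_1b_1$ and $a_3b_2$) plus the isolated vertex $a_2$, and hence has independence number only $3$. This yields $S\cap K\in\{A_0,B_0\}$, which is the first assertion. I expect this case analysis to be the main obstacle, since it is the one place where the specific adjacency pattern of the core must be used.

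Finally I would combine the two pieces. Recall that $P_{2k}$ has exactly $k+1$ maximum independent sets $I_0,\dots,I_k$, of which only $I_0=\{y_1,\dots,y_k\}$ omits $x_k$ and only $I_k=\{x_1,\dots,x_k\}$ omits $y_1$. If $S\cap K=A_0$ then $a_4\in S$ forces $y_1\notin S$, so $S\cap P=I_k$ and $S=I_k\cup A_0$ is the unique such set, containing $x_k$ with $\deg(S)=(2k-1)+13=2k+12$. If $S\cap K=B_0$ then $a_4\notin S$ imposes no constraint, so $S\cap P$ ranges over all of $I_0,\dots,I_k$; every resulting set except $I_0\cup B_0$ contains $x_k$, and one computes $\deg(I_0\cup B_0)=2k+12$ while $\deg(I_j\cup B_0)=(2k-1)+12=2k+11$ for $j\ge 1$. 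Thus there are $k+2$ maximum independent sets in all, exactly one ($I_0\cup B_0$) omitting $x_k$, and $\deg(S)\in\{2k+11,2k+12\}$, which establishes the remaining assertions.
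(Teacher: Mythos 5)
Your proof is correct and follows essentially the same route as the paper: decompose $G_k$ into the path on $\{x_i,y_i\}$ and the core on $\{a_i,b_i\}$, use bipartiteness plus a perfect matching to get $\alpha(G_k)=k+4$, reduce the inclusion claim to showing the core has only $A_0$ and $B_0$ as maximum independent sets, and finish with a case analysis over the (unique cross-edge) interaction $y_1a_4$. The only real difference is the sub-argument for the core: the paper exhibits a spanning cycle $C_8$ of $G_0$ and observes that adding edges cannot create new maximum independent sets, whereas you argue directly that each of $a_4,b_3,b_4$ dominates the opposite side and check the remaining five vertices by hand; both are valid.
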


\begin{proof}
Let $G_{0}=G_{k}-\left\{  x_{1},...x_{k},y_{1},...,y_{k}\right\}  =\left(
A_{0},B_{0},E_{0}\right)  $. Clearly, $\alpha\left(  G_{0}\right)  =4$ and
$A_{0},B_{0}\in\Omega\left(  G_{0}\right)  $, because $G_{0}$ has a perfect
matching. The spanning graph
\[
L=\left(  V\left(  G_{0}\right)  ,\left\{  a_{1}b_{4},b_{4}a_{2},a_{2}%
b_{3},b_{3}a_{3},a_{3}b_{2},b_{2}a_{4},a_{4}b_{1},b_{1}a_{1}\right\}  \right)
=\left(  A_{0},B_{0},U\right)
\]
of $G_{0}$ has exactly two maximum independent sets, namely $A_{0}$ and
$B_{0}$, as $L$ is isomorphic to $C_{8}$. The graph $G_{0}$ can be obtained
from $L$ by adding some edges keeping $A_{0},B_{0}$ as independent sets.
Consequently, we infer that $\Omega\left(  G_{0}\right)  =\Omega\left(
L\right)  =\left\{  A_{0},B_{0}\right\}  $.

Since $G_{k}$ is bipartite and has a perfect matching, we get that
$\alpha\left(  G_{k}\right)  =k+4$ for every $k\geq1$, and $\left\vert
S\cap\left(  \left\{  x_{i}:i=1,...,k\right\}  \cup\left\{  y_{i}%
:i=1,...,k\right\}  \right)  \right\vert =k$, for each $S\in\Omega\left(
G_{k}\right)  $.

Assume that there is some $S^{\prime}\in\Omega\left(  G_{k}\right)  $ such
that both $A_{1}=S^{\prime}\cap A_{0}\neq\emptyset$ and $B_{1}=S^{\prime}\cap
B_{0}\neq\emptyset$. Since
\[
S^{\prime}=\left(  S^{\prime}\cap\left(  \left\{  x_{i}:i=1,...,k\right\}
\cup\left\{  y_{i}:i=1,...,k\right\}  \right)  \right)  \cup A_{1}\cup B_{1},
\]
we get that $A_{1}\cup B_{1}$ is an independent set in $G_{0}$ of size $4$,
different from both $A_{0}$ and $B_{0}$, thus contradicting the fact that
$\Omega\left(  G_{0}\right)  =\left\{  A_{0},B_{0}\right\}  $. In conclusion,
every maximum independent set of $G_{k}$ includes either $A_{0}=\left\{
a_{i}:i=1,...,4\right\}  $ or $B_{0}=\left\{  b_{i}:i=1,...,4\right\}  $.

Let $S\in\Omega\left(  G_{k}\right)  $.

\textit{Case 1.} $y_{k}\in S$. Then, necessarily, we get that $\left\{
y_{i}:i=1,...,k-1\right\}  \cup B_{0}\subset S$, because $y_{1}a_{4}\in
E\left(  G_{k}\right)  $. Consequently, $\deg\left(  S\right)  =m\left(
G_{k}\right)  =2k+12$.

\textit{Case 2.} $y_{k}\notin S$. Then, necessarily, $x_{k}\in S$.

If $\left\{  x_{i}:i=1,...,k-1\right\}  \subset S$, we have two options:

\begin{itemize}
\item $S=\left\{  x_{i}:i=1,...,k\right\}  \cup A_{0}$, and therefore,
$\deg\left(  S\right)  =m\left(  G_{k}\right)  =2k+12$.

\item $S=\left\{  x_{i}:i=1,...,k\right\}  \cup B_{0}$, and therefore,
$\deg\left(  S\right)  =m\left(  G_{k}\right)  -1=2k+11$.
\end{itemize}

Otherwise, if $y_{j}\in S$, then $\left\{  y_{i}:i=1,...,j-1\right\}  \cup
B_{0}\subset S$, and hence, we get that $\deg\left(  S\right)  =2k+11$.
\end{proof}

\begin{theorem}
\label{th5}For every $k\geq0$, there exists a connected bipartite graph
$G_{k}$, of order $2k+8$, satisfying the following:

\begin{itemize}
\item $h\left(  G_{k}\right)  >\frac{n\left(  G_{k}\right)  }{2}=\alpha\left(
G_{k}\right)  $,

\item each $S\in\Omega\left(  G_{k}\right)  $ is a maximal annihilating set.
\end{itemize}
\end{theorem}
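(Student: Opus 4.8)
The plan is to read the degree sequence of $G_{k}$ directly off Figure \ref{fig5555} and then extract $h\left( G_{k}\right) $ from it. For $k\geq 1$ the unique leaf is $x_{k}$; the vertices $a_{2},b_{1},b_{2}$ together with $x_{1},\dots ,x_{k-1}$ and $y_{1},\dots ,y_{k}$ have degree $2$; the vertices $a_{1},a_{3}$ have degree $3$; the vertices $b_{3},b_{4}$ have degree $4$; and $a_{4}$ has degree $5$. This gives the sorted degree sequence
\[
1,\underbrace{2,\dots ,2}_{2k+2},3,3,4,4,5 ,
\]
whose sum is $4k+24$, so $m\left( G_{k}\right) =2k+12$. (For $k=0$ the graph is the core $G_{0}$, with degree sequence $2,2,2,3,3,4,4,4$ and $m=12$, which I would dispatch separately.) Summing the $k+5$ smallest degrees stays within the budget $m\left( G_{k}\right) =2k+12$ --- for $k\geq 2$ these are the single $1$ and $k+4$ twos, of total $2k+9$, and the remaining small values of $k$ are checked by hand --- so $h\left( G_{k}\right) \geq k+5$. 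Combined with $\alpha \left( G_{k}\right) =k+4=n\left( G_{k}\right) /2$ from Lemma \ref{lem5}, this yields the first bullet $h\left( G_{k}\right) >n\left( G_{k}\right) /2=\alpha \left( G_{k}\right) $.

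For the second bullet I would lean entirely on the structural information supplied by Lemma \ref{lem5}, namely that every $S\in \Omega \left( G_{k}\right) $ satisfies $\deg (S)\in \{2k+11,2k+12\}=\{m-1,m\}$, so $S$ is automatically annihilating; what remains is maximality, i.e. $\deg (S)+\deg (v)>m$ for every $v\in V\left( G_{k}\right) -S$. When $\deg (S)=m$ this is immediate, since $G_{k}$ has no isolated vertices and hence $\deg (v)\geq 1$.

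The delicate case, and the main obstacle, is $\deg (S)=m-1=2k+11$: here maximality can fail only if some $v\notin S$ has $\deg (v)=1$, and the only degree-$1$ vertex is the leaf $x_{k}$. Thus I must guarantee that $x_{k}\in S$ whenever $\deg (S)=2k+11$, which is exactly where the fine dichotomy in Lemma \ref{lem5} is indispensable: the unique maximum independent set omitting $x_{k}$ is $S^{\ast }=\{y_{1},\dots ,y_{k}\}\cup B_{0}$ (the set forced by $y_{k}\in S$), and it satisfies $\deg (S^{\ast })=m=2k+12$. Hence any $S$ with $\deg (S)=2k+11\neq \deg (S^{\ast })$ differs from $S^{\ast }$ and therefore contains $x_{k}$, so every vertex outside $S$ has degree at least $2$ and $\deg (S)+\deg (v)\geq 2k+13>m$. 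Welding the degree-sequence bookkeeping to this structural split is the crux; once it is secured, every $S\in \Omega \left( G_{k}\right) $ is a maximal annihilating set, completing the proof.
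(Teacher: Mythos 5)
Your proposal is correct and follows essentially the same route as the paper's proof: read the degree sequence off Figure \ref{fig5555} to get $m\left(G_{k}\right)=2k+12$ and $h\left(G_{k}\right)>k+4=\alpha\left(G_{k}\right)$, then combine Lemma \ref{lem5}'s conclusion $\deg(S)\in\{m-1,m\}$ with the observation that the only $S\in\Omega\left(G_{k}\right)$ missing the unique leaf $x_{k}$ is $B_{0}\cup\{y_{1},\dots,y_{k}\}$, which has $\deg(S)=m$. The only cosmetic differences are that the paper computes $h\left(G_{k}\right)$ exactly (splitting into cases $k=0,1,2,\geq3$) where you settle for the lower bound $k+5$, and it disposes of the exceptional set via Lemma \ref{lem7} (it is an independent vertex cover) rather than by noting $\deg(S)=m$ directly.
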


\begin{proof}
Let $G_{0}=G_{k}-\left\{  x_{1},...x_{k},y_{1},...,y_{k}\right\}  $, where
$G_{k}=(A_{k},B_{k},E_{k}),k\geq1$, is the graph from Figure \ref{fig5555},
while
\begin{align*}
A_{0}  &  =\left\{  a_{i}:i=1,2,3,4\right\}  ,B_{0}=\left\{  b_{i}%
:i=1,2,3,4\right\}  ,\\
A_{k}  &  =A_{0}\cup\left\{  x_{1},...,x_{k}\right\}  \text{ and }B_{k}%
=B_{0}\cup\left\{  y_{1},...,y_{k}\right\}  .
\end{align*}
Clearly, $\frac{n\left(  G_{k}\right)  }{2}=\alpha\left(  G_{k}\right)  $,
since every $G_{k}$ is a bipartite graph with a perfect matching.

\textit{Case 1}. $k=0$. The bipartite graph $G_{0}=\left(  A_{0}%
,B_{0},E\right)  $ has $m\left(  G_{0}\right)  =12$ and the degree sequence
$\left(  2,2,2,3,3,4,4,4\right)  $. Hence $h\left(  G_{0}\right)
=5>4=\alpha\left(  G_{0}\right)  $. In addition, $\Omega\left(  G_{0}\right)
=\left\{  A_{0},B_{0}\right\}  $ and $\deg\left(  A_{0}\right)  =\deg\left(
B_{0}\right)  =m\left(  G_{0}\right)  $, i.e., each $S\in\Omega\left(
G_{0}\right)  $ is a maximal non-maximum annihilating set.

\textit{Case 2}. $k=1$. The bipartite graph $G_{1}=\left(  A_{1}%
,B_{1},E\right)  $ has $m\left(  G_{1}\right)  =14$ and the degree sequence
$\left(  1,2,2,2,2,3,3,4,4,5\right)  $. Hence $h\left(  G_{1}\right)
=6>5=\alpha\left(  G_{1}\right)  $. According to Lemma \ref{lem5}, for every
$S\in$ $\Omega\left(  G_{1}\right)  $ we have that $\deg\left(  S\right)
\in\left\{  13,14\right\}  $, while
\[
\deg\left(  S\right)  +\min\left\{  \deg\left(  v\right)  :v\notin S\right\}
>14,
\]
i.e., each $S\in$ $\Omega\left(  G_{1}\right)  $ is a maximal non-maximum
annihilating set.

\textit{Case 3}. $k=2$. The bipartite graph $G_{2}=\left(  A_{2}%
,B_{2},E\right)  $ has $m\left(  G_{2}\right)  =16$ and the degree sequence
$\left(  1,2,2,2,2,2,2,3,3,4,4,5\right)  $. Hence $h\left(  G_{2}\right)
=8>6=\alpha\left(  G_{2}\right)  $. By Lemma \ref{lem5}, for every $S\in$
$\Omega\left(  G_{2}\right)  $ we have that $\deg\left(  S\right)  \in\left\{
15,16\right\}  $, while
\[
\deg\left(  S\right)  +\min\left\{  \deg\left(  v\right)  :v\notin S\right\}
>16,
\]
i.e., each $S\in$ $\Omega\left(  G_{2}\right)  $ is a maximal non-maximum
annihilating set.

\textit{Case 4}. $k\geq3$. The bipartite graph $G_{k}=\left(  A_{k}%
,B_{k},E\right)  $ has the degree sequence
\[
1,\underset{2k+2}{\underbrace{2,\cdots,2}},3,3,4,4,5.
\]
Thus $m\left(  G_{k}\right)  =2k+12$. It follows that the sum $1+2\left(
2k+2\right)  +3=4k+8$ of the first $2k+4$ degrees is greater than $m\left(
G_{k}\right)  $ for each $k\geq3$. Hence, $h\left(  G_{k}\right)  \leq2k+3$.

The inequality $1+2x\leq m\left(  G_{k}\right)  $ leads to $x\leq\frac
{2k+11}{2}$, which gives
\[
h\left(  G_{k}\right)  =1+\left\lfloor \frac{2k+11}{2}\right\rfloor
=k+6>k+4=\alpha\left(  G_{k}\right)  .
\]

Lemma \ref{lem5} claims that for every $S\in$ $\Omega\left(  G_{k}\right)  $
we have that $\deg\left(  S\right)  \in\left\{  2k+11,2k+12\right\}  $.

If $S\in$ $\Omega\left(  G_{k}\right)  $, then $\min\left\{  \deg\left(
v\right)  :v\notin S\right\}  =1$ if and only if $S=B_{0}\cup\left\{
y_{1},...,y_{k}\right\}  $. To this end, $S$\ is a vertex cover of $G_{k}$.
Thus, by Lemma \ref{lem7}, $S$\ is a maximal non-maximum annihilating set.

Otherwise, we get that
\[
\deg\left(  S\right)  +\min\left\{  \deg\left(  v\right)  :v\notin S\right\}
\geq\deg\left(  S\right)  +2>m\left(  G_{k}\right)  .
\]
Therefore, each $S\in$ $\Omega\left(  G_{k}\right)  $ is a maximal non-maximum
annihilating set.
\end{proof}

\begin{lemma}
\label{lem3}The graph $G_{k},k\geq0$, from Figure \ref{fig22} has a
\textit{unique maximum independent set, namely, }$A_{k}=\left\{
x_{k},...,x_{1},a_{5},a_{4},a_{3},a_{2},a_{1}\right\}  $, where $G_{0}%
=G_{k}-\left\{  x_{1},...x_{k},y_{1},...,y_{k}\right\}  $ and $A_{0}=\left\{
a_{5},a_{4},a_{3},a_{2},a_{1}\right\}  $\textit{.}\begin{figure}[h]
\setlength{\unitlength}{1cm}\begin{picture}(5,2.2)\thicklines
\multiput(3,0.5)(1,0){8}{\circle*{0.29}}
\multiput(3,1.5)(1,0){9}{\circle*{0.29}}
\put(3,0.5){\line(0,1){1}}
\put(3,0.5){\line(1,1){1}}
\put(4,0.5){\line(0,1){1}}
\multiput(4,0.5)(0.2,0){5}{\circle*{0.07}}
\put(5,0.5){\line(0,1){1}}
\put(5,0.5){\line(1,1){1}}
\put(6,0.5){\line(0,1){1}}
\put(6,0.5){\line(1,1){1}}
\put(7,0.5){\line(0,1){1}}
\put(7,0.5){\line(4,1){4}}
\put(8,0.5){\line(3,1){3}}
\put(7,0.5){\line(1,1){1}}
\put(7,0.5){\line(2,1){2}}
\put(7,0.5){\line(3,1){3}}
\put(7,1.5){\line(1,-1){1}}
\put(7,1.5){\line(2,-1){2}}
\put(7,1.5){\line(3,-1){3}}
\put(8,0.5){\line(0,1){1}}
\put(8,0.5){\line(1,1){1}}
\put(8,0.5){\line(2,1){2}}
\put(8,1.5){\line(1,-1){1}}
\put(10,0.5){\line(0,1){1}}
\put(3,1.85){\makebox(0,0){$x_{k}$}}
\put(4,1.85){\makebox(0,0){$x_{k-1}$}}
\put(5,1.85){\makebox(0,0){$x_{1}$}}
\put(6,1.85){\makebox(0,0){$x_{1}$}}
\put(3,0.15){\makebox(0,0){$y_{k}$}}
\put(4,0.15){\makebox(0,0){$y_{k-1}$}}
\put(5,0.15){\makebox(0,0){$y_{2}$}}
\put(6,0.15){\makebox(0,0){$y_{1}$}}
\put(7,1.85){\makebox(0,0){$a_{5}$}}
\put(8,1.85){\makebox(0,0){$a_{4}$}}
\put(9,1.85){\makebox(0,0){$a_{3}$}}
\put(10,1.85){\makebox(0,0){$a_{2}$}}
\put(11,1.85){\makebox(0,0){$a_{1}$}}
\put(7,0.15){\makebox(0,0){$b_{4}$}}
\put(8,0.15){\makebox(0,0){$b_{3}$}}
\put(9,0.15){\makebox(0,0){$b_{2}$}}
\put(10,0.15){\makebox(0,0){$b_{1}$}}
\put(2,1){\makebox(0,0){$G_{k}$}}
\end{picture}\caption{A bipartite graph of odd order with $\alpha\left(
G_{k}\right)  =k+5,k\geq0$.}%
\label{fig22}%
\end{figure}
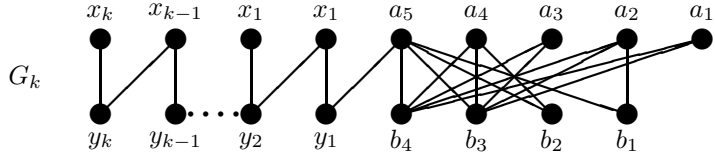
\end{lemma}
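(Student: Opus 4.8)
The plan is to split $G_{k}$ along its single bridge and treat the two pieces separately. Write $C=V(G_{0})=\{a_{1},\dots,a_{5},b_{1},\dots,b_{4}\}$ for the core and $P'=\{x_{1},y_{1},\dots,x_{k},y_{k}\}$ for the attached part, so that $V(G_{k})=C\sqcup P'$ and, reading Figure~\ref{fig22}, the only edge joining the two sets is $y_{1}a_{5}$. Every independent set $S$ then satisfies $|S|=|S\cap C|+|S\cap P'|$, and I will bound each summand and analyse when equality can occur.

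First I would settle the core. From Figure~\ref{fig22} both $b_{3}$ and $b_{4}$ are adjacent to all of $a_{1},\dots,a_{5}$, while $b_{1}$ is adjacent only to $a_{2},a_{5}$ and $b_{2}$ only to $a_{4},a_{5}$. Let $S$ be any independent set of $G_{0}$. If $S$ meets $\{b_{3},b_{4}\}$ then $S$ contains no $a_{i}$, so $S\subseteq B_{0}$ and $|S|\le 4$. Otherwise $S\subseteq A_{0}\cup\{b_{1},b_{2}\}$; but each of $b_{1},b_{2}$ that lies in $S$ deletes two vertices of $A_{0}$, always including $a_{5}$, so as soon as some $b_{i}$ is chosen we again get $|S|\le 4$. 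Hence the only independent set of size $5$ is $A_{0}$ itself, which proves simultaneously that $\alpha(G_{0})=5$ and that $A_{0}$ is its unique maximum independent set; in particular $|S\cap C|\le 5$ for every $S$.

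Next I would record the relevant path facts. The induced subgraph $G[P']$ is exactly the path $x_{k}\,y_{k}\,x_{k-1}\,\cdots\,x_{1}\,y_{1}$ on $2k$ vertices, so $|S\cap P'|\le\alpha(G[P'])=k$; combined with the core bound this gives $|S|\le k+5$ for every independent $S$, and since $A_{k}$ attains this value we conclude $\alpha(G_{k})=k+5$ with $A_{k}$ maximum. Adjoining $a_{5}$ turns this into the induced path $G[\{a_{5}\}\cup P']=a_{5}\,y_{1}\,x_{1}\,y_{2}\,\cdots\,y_{k}\,x_{k}$ on $2k+1$ vertices, since its remaining incident edges all leave the set. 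Here I would invoke the elementary fact that an odd path $P_{2m+1}$ on $v_{0},\dots,v_{2m}$ has a \emph{unique} maximum independent set, namely the even-indexed vertices $\{v_{0},v_{2},\dots,v_{2m}\}$, the uniqueness being immediate from the count $\binom{(2m+1)-(m+1)+1}{m+1}=\binom{m+1}{m+1}=1$ of $(m+1)$-subsets with no two consecutive. For the present labelling the even positions are precisely $a_{5},x_{1},\dots,x_{k}$.

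Finally I would deduce uniqueness in $G_{k}$. Take any independent $S$ with $|S|=k+5$. Then both bounds $|S\cap C|\le 5$ and $|S\cap P'|\le k$ must be equalities, so $S\cap C$ is a maximum independent set of $G_{0}$ and therefore equals $A_{0}$ by the first step; in particular $a_{5}\in S$. Consequently $S\cap(\{a_{5}\}\cup P')$ is an independent set of the path $P_{2k+1}$ of size $1+k=k+1=\alpha(P_{2k+1})$, so by the odd-path uniqueness it is exactly $\{a_{5},x_{1},\dots,x_{k}\}$, forcing $S\cap P'=\{x_{1},\dots,x_{k}\}$. Hence $S=A_{0}\cup\{x_{1},\dots,x_{k}\}=A_{k}$, as claimed. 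I expect the only genuinely delicate point to be the finite case analysis establishing the core uniqueness; the remainder is just the standard even-indexed maximum independent set of an odd path, propagated across the single bridge $y_{1}a_{5}$, which forces $y_{1}\notin S$ the moment $a_{5}\in S$.
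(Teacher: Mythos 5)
Your proof is correct, and it takes a genuinely different route from the paper's. You split $G_{k}$ along the unique bridge $y_{1}a_{5}$, establish by a direct finite case analysis on the nine-vertex core that $A_{0}$ is the unique maximum independent set of $G_{0}$ (your observation that each of $b_{1},b_{2}$ excludes two vertices of $A_{0}$, always including $a_{5}$, is exactly what makes the ``both chosen'' case close at $4$), and then propagate uniqueness through the pendant path via the standard fact that $P_{2m+1}$ has a unique maximum independent set, the even-indexed vertices. The paper instead first computes $\mu(G_{k})=k+4$ from an explicit perfect-matching-like maximum matching and uses the K\"{o}nig--Egerv\'{a}ry equality for bipartite graphs to get $\alpha(G_{k})=k+5$; for uniqueness it exhibits the spanning tree $T_{k}$ of Figure \ref{fig122}, notes that all its leaves lie in $A_{k}$ so inter-leaf distances are even, and invokes the Hopkins--Staton characterization (Theorem \ref{th2}) of strong unique maximum independence graphs. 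Your argument is more elementary and entirely self-contained (it never needs Theorem \ref{th2}), at the price of the explicit core case analysis; the paper's argument is shorter given the cited theorem and in fact yields the slightly stronger conclusion that $G_{k}$ is a strong unique maximum independence graph, i.e., that $V(G_{k})-A_{k}$ is independent too, though that extra strength is not needed for the lemma. Both approaches agree on the one structural fact that matters: every maximum independent set must contain $a_{5}$, which then forces the $x_{i}$'s down the path.
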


\begin{proof}
For every $k\geq0$, the set $\left\{  x_{j}y_{j}:j=1,...,k\right\}
\cup\left\{  a_{5}b_{4},a_{4}b_{2},a_{3}b_{3},a_{2}b_{1}\right\}  $ is a
maximum matching of $G_{k}$, which gives $\mu\left(  G_{k}\right)  =k+4$.
Since $\left\vert V\left(  G_{k}\right)  \right\vert =2k+9=\alpha\left(
G_{k}\right)  +\mu\left(  G_{k}\right)  $ and $A_{k}$ is an independent set of
cardinality $k+5$, we infer that $A_{k}\in\Omega\left(  G_{k}\right)  $ and
$\alpha\left(  G_{k}\right)  =k+5$.

Consider the trees $T_{k}=\left(  A_{k},V\left(  G_{k}\right)  -A_{k}%
,U\right)  ,k\geq1$ and $T_{0}=T_{k}-\{x_{j},y_{j}:1\leq j\leq k\}$ from
Figure \ref{fig122}. Since all the leaves of $T_{k}$ belong to $A_{k}$, it
follows that the distance between any two leaves is even. Consequently, by
Theorem \ref{th2}\textit{(i)}, we get that $T_{k}$ is a strong unique maximum
independence tree, and $\Omega\left(  T_{k}\right)  =\left\{  A_{k}\right\}
$. On the other hand, $T_{k}$ is a spanning tree of $G_{k}$, for every
$k\geq0$. Further, Theorem \ref{th2}\textit{(ii)} implies that $G_{k}$ is a
strong unique maximum independence graph and $\Omega\left(  G_{k}\right)
=\left\{  A_{k}\right\}  $.
\end{proof}

\begin{theorem}
\label{th6}For each $k\geq0$, there exists a connected bipartite graph $G_{k}$
of order $2k+9$, satisfying the following:

\begin{itemize}
\item $h\left(  G_{k}\right)  >\left\lceil \frac{n\left(  G_{k}\right)  }%
{2}\right\rceil =\alpha\left(  G_{k}\right)  $,

\item every $S\in\Omega\left(  G_{k}\right)  $ is a maximal annihilating set.
\end{itemize}
\end{theorem}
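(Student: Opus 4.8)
The plan is to extract all the structural information we need from Lemma~\ref{lem3} and then establish the two bullets separately: the maximal-annihilating property by a vertex-cover argument through Lemma~\ref{lem7}, and the inequality on $h$ by a direct inspection of the degree sequence of $G_{k}$.

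First I would record the output of Lemma~\ref{lem3}: it gives $\Omega\left(G_{k}\right)=\left\{A_{k}\right\}$ with $A_{k}=\left\{x_{k},\dots,x_{1},a_{5},a_{4},a_{3},a_{2},a_{1}\right\}$, that $\alpha\left(G_{k}\right)=k+5$, and that $G_{k}$ is a strong unique maximum independence graph. Since $n\left(G_{k}\right)=2k+9$ is odd, this already yields $\left\lceil \frac{n\left(G_{k}\right)}{2}\right\rceil =k+5=\alpha\left(G_{k}\right)$, so the equality asserted in the statement is immediate, and for the first bullet only the strict inequality $h\left(G_{k}\right)>k+5$ remains.

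For the second bullet I would show that $A_{k}$ is an independent vertex cover. By the definition of a strong unique maximum independence graph, $V\left(G_{k}\right)-A_{k}$ is independent; as $A_{k}$ is independent as well, no edge can have both endpoints in $A_{k}$ nor both in $V\left(G_{k}\right)-A_{k}$, so every edge meets $A_{k}$ in exactly one endpoint. Hence $A_{k}$ is an independent vertex cover. Because $G_{k}$ is connected and nontrivial it has no isolated vertices, so Lemma~\ref{lem7} applies and shows that $A_{k}$, the unique member of $\Omega\left(G_{k}\right)$, is a maximal annihilating set, settling the second bullet.

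It remains to prove $h\left(G_{k}\right)>\alpha\left(G_{k}\right)=k+5$. Reading the adjacencies off Figure~\ref{fig22}, every $y_{j}$ has degree $2$, each $x_{j}$ with $j<k$ has degree $2$ while $x_{k}$ has degree $1$, and in the $\left\{a,b\right\}$-part one finds $\deg\left(a_{5}\right)=\deg\left(b_{4}\right)=\deg\left(b_{3}\right)=5$, $\deg\left(a_{4}\right)=\deg\left(a_{2}\right)=3$, and $\deg\left(a_{3}\right)=\deg\left(a_{1}\right)=\deg\left(b_{2}\right)=\deg\left(b_{1}\right)=2$. Thus the degree sequence of $G_{k}$ is
\[
1,\underset{2k+3}{\underbrace{2,\dots,2}},3,3,5,5,5,
\]
giving $m\left(G_{k}\right)=2k+14$. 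To finish I would bound the sum of the $k+6$ smallest degrees: for $k\geq 2$ these are the single vertex of degree $1$ together with $k+5$ vertices of degree $2$, with sum $1+2\left(k+5\right)=2k+11\leq 2k+14=m\left(G_{k}\right)$, while the two remaining cases $k\in\left\{0,1\right\}$ are checked by the same direct counting. Hence $h\left(G_{k}\right)\geq k+6>k+5=\alpha\left(G_{k}\right)$, which is precisely the strict inequality required. The only delicate point is the bookkeeping of the degree sequence—getting the multiplicities of the degrees $2,3,5$ right and ensuring that enough degree-$2$ vertices are available to realize the bound $h\left(G_{k}\right)\geq k+6$ (which forces the separate treatment of small $k$); everything else reduces to Lemma~\ref{lem3} and Lemma~\ref{lem7}.
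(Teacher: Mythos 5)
Your proof is correct, and it follows the paper's skeleton (Lemma \ref{lem3} for $\Omega(G_k)=\{A_k\}$ and $\alpha(G_k)=k+5$, then the degree sequence $1,2,\dots,2,3,3,5,5,5$ with $m(G_k)=2k+14$), but it streamlines the two places where the paper does the heavy lifting. For the annihilating bullet, the paper verifies in four separate cases that $\deg(A_k)=m(G_k)$ and that $\deg(A_k)+\min\{\deg(v):v\notin A_k\}>m(G_k)$; you instead observe that $A_k$ is an independent vertex cover (immediate from bipartiteness, or from the strong-unique-maximum-independence property established inside the proof of Lemma \ref{lem3}, though not in its statement) and invoke Lemma \ref{lem7} once, uniformly in $k$ --- exactly the device the paper itself uses in Theorem \ref{Th12}. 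For the inequality, the paper computes $h(G_k)$ exactly (it equals $k+7$ for $k\geq3$), whereas you only establish the lower bound $h(G_k)\geq k+6$ by summing the $k+6$ smallest degrees, which is all the statement requires; your condition $k+5\leq 2k+3$ correctly isolates $k\geq2$, and your formulaic degree sequence genuinely fails only at $k=0$ (where $a_5$ has degree $4$ and there is no degree-one vertex, the sequence being $(2,2,2,2,3,3,4,5,5)$ with $m=14$), so your proviso that small $k$ be checked directly is needed and suffices. Net effect: same counterexample and same key facts, but with less case analysis and a weaker (yet sufficient) estimate on $h$.
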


\begin{proof}
Let $G_{k}=(A_{k},B_{k},E_{k}),k\geq0$, be the bipartite graph from Figure
\ref{fig22}, where
\begin{align*}
A_{0}  &  =\left\{  a_{i}:i=1,2,3,4,5\right\}  ,B_{0}=\left\{  b_{i}%
:i=1,2,3,4\right\}  ,\\
A_{k}  &  =A_{0}\cup\left\{  x_{1},...,x_{k}\right\}  \text{ and }B_{k}%
=B_{0}\cup\left\{  y_{1},...,y_{k}\right\}  .
\end{align*}
Lemma \ref{lem3} claims that $A_{k}=\left\{  x_{1},...x_{k},a_{1}%
,...,a_{5}\right\}  $ is the unique maximum independent set of $G_{k}$. Hence,
$\left\lceil \frac{n\left(  G_{k}\right)  }{2}\right\rceil =\alpha\left(
G_{k}\right)  $, since $G_{k}-a_{1}$ is bipartite and has a perfect matching.

\textit{Case 1}. $k=0$. The bipartite graph $G_{0}=\left(  A_{0},B_{0}%
,E_{0}\right)  $ has $m\left(  G_{0}\right)  =14$ and the degree sequence
$\left(  2,2,2,2,3,3,4,5,5\right)  $. Hence, $h\left(  G_{0}\right)
=6>5=\alpha\left(  G_{0}\right)  $. In addition, $\deg\left(  A_{0}\right)
=m\left(  G_{0}\right)  $, i.e., each maximum independent set of $G_{0}$ is a
maximal non-maximum annihilating set.

\textit{Case 2}. $k=1$. The bipartite graph $G_{1}=\left(  A_{1},B_{1}%
,E_{1}\right)  $ has $m\left(  G_{1}\right)  =16$ and the degree sequence
$\left(  1,2,2,2,2,2,3,3,4,4,5,5\right)  $. Hence, $h\left(  G_{1}\right)
=7>6=\alpha\left(  G_{1}\right)  $. In addition, $\deg\left(  A_{1}\right)
=m\left(  G_{1}\right)  $, while
\[
\deg\left(  A_{1}\right)  +\min\left\{  \deg\left(  v\right)  :v\notin
A_{1}\right\}  \geq16+2>m\left(  G_{1}\right)  ,
\]
i.e., each maximum independent set of $G_{1}$ is a maximal non-maximum
annihilating set.

\textit{Case 3}. $k=2$. The bipartite graph $G_{2}=\left(  A_{2},B_{2}%
,E_{2}\right)  $ has $m\left(  G_{2}\right)  =18$ and the degree sequence
$\left(  1,2,2,2,2,2,2,2,3,3,5,5,5\right)  $. Hence, $h\left(  G_{2}\right)
=9>7=\alpha\left(  G_{2}\right)  $. In addition, $\deg\left(  A_{2}\right)
=m\left(  G_{2}\right)  $, while
\[
\deg\left(  A_{2}\right)  +\min\left\{  \deg\left(  v\right)  :v\notin
A_{2}\right\}  \geq18+2>m\left(  G_{2}\right)  ,
\]
i.e., each maximum independent set of $G_{2}$ is a maximal non-maximum
annihilating set.

\textit{Case 4}. $k\geq3$. The bipartite graph $G_{k}=\left(  A_{k}%
,B_{k},E_{k}\right)  $ has the degree sequence
\[
1,\underset{2k+3}{\underbrace{2,\cdots,2}},3,3,5,5,5.
\]
Thus $m\left(  G_{k}\right)  =2k+14$. It follows that the sum $1+2\left(
2k+3\right)  +3=4k+10$ of the first $2k+5$ degrees is greater than $m\left(
G_{k}\right)  $ for each $k\geq3$. Hence, $h\left(  G_{k}\right)  \leq2k+4$.

The inequality $1+2x\leq m\left(  G_{k}\right)  $ leads to $x\leq\frac
{2k+13}{2}$, which gives
\[
h\left(  G_{k}\right)  =1+\left\lfloor \frac{2k+13}{2}\right\rfloor
=k+7>k+5=\alpha\left(  G_{k}\right)  .
\]

In addition, $\deg\left(  A_{k}\right)  =m\left(  G_{k}\right)  $, while
\[
\deg\left(  A_{k}\right)  +\min\left\{  \deg\left(  v\right)  :v\notin
A_{k}\right\}  =2k+16>m\left(  G_{k}\right)  ,
\]
i.e., each maximum independent set of $G_{k}$ is maximal non-maximum annihilating.
\end{proof}

\section{Non-bipartite K\"{o}nig-Egerv\'{a}ry counterexamples}

In what follows, we present a series of counterexamples to the opposite
direction of Conjecture \ref{Conj1} for non-bipartite K\"{o}nig-Egerv\'{a}ry
graphs. All these graphs have unique maximum independent sets.

\begin{lemma}
\label{lem6}The graph $H_{k},k\geq0$, from Figure \ref{fig717} is a
K\"{o}nig-Egerv\'{a}ry graph that has a unique maximum independent
set\textit{, namely, }$S_{k}=\left\{  x_{k},...,x_{1},a_{4},a_{3},a_{2}%
,a_{1}\right\}  $, where $H_{0}=H_{k}-\left\{  x_{j},y_{j}%
:j=1,2,...,k\right\}  $\ and $S_{0}=\left\{  a_{4},a_{3},a_{2},a_{1}\right\}
$.\begin{figure}[h]
\setlength{\unitlength}{0.85cm}\begin{picture}(5,4)\thicklines
\multiput(2,1)(2,0){7}{\circle*{0.29}}
\multiput(2,3)(2,0){7}{\circle*{0.29}}
\multiput(4,1)(0.2,0){10}{\circle*{0.12}}
\multiput(4,3)(0.2,0){10}{\circle*{0.12}}
\put(6,1){\line(1,0){4}}
\put(2,1){\line(0,1){2}}
\put(2,1){\line(1,0){2}}
\put(2,1){\line(1,1){2}}
\put(2,1){\line(2,1){4}}
\put(2,1){\line(3,1){6}}
\put(2,1){\line(4,1){8}}
\put(2,1){\line(5,1){10}}
\put(2,1){\line(6,1){12}}
\put(2,3){\line(1,-1){2}}
\put(2,3){\line(2,-1){4}}
\put(2,3){\line(3,-1){6}}
\put(2,3){\line(4,-1){8}}
\put(2,3){\line(5,-1){10}}
\put(2,3){\line(6,-1){12}}
\put(4,1){\line(0,1){2}}
\put(4,1){\line(1,1){2}}
\put(4,1){\line(2,1){4}}
\put(4,1){\line(3,1){6}}
\put(4,1){\line(4,1){8}}
\put(4,1){\line(5,1){10}}
\put(4,3){\line(1,-1){2}}
\put(4,3){\line(2,-1){4}}
\put(4,3){\line(3,-1){6}}
\put(4,3){\line(4,-1){8}}
\put(4,3){\line(5,-1){10}}
\put(6,1){\line(0,1){2}}
\put(6,1){\line(1,1){2}}
\put(6,1){\line(2,1){4}}
\put(6,1){\line(3,1){6}}
\put(6,1){\line(4,1){8}}
\put(6,3){\line(1,-1){2}}
\put(6,3){\line(2,-1){4}}
\put(6,3){\line(3,-1){6}}
\put(6,3){\line(4,-1){8}}
\put(14,1){\line(0,1){2}}
\put(8,1){\line(0,1){2}}
\put(8,1){\line(1,1){2}}
\put(8,1){\line(2,1){4}}
\put(8,1){\line(3,1){6}}
\put(8,3){\line(1,-1){2}}
\put(8,3){\line(2,-1){4}}
\put(8,3){\line(3,-1){6}}
\put(10,3){\line(1,-1){2}}
\put(10,1){\line(0,1){2}}
\put(10,1){\line(1,1){2}}
\put(10,1){\line(2,1){4}}
\put(14,3.35){\makebox(0,0){$a_{1}$}}
\put(12,3.35){\makebox(0,0){$a_{2}$}}
\put(10,3.35){\makebox(0,0){$a_{3}$}}
\put(8,3.35){\makebox(0,0){$a_{4}$}}
\put(14,3.75){\makebox(0,0){$k+3$}}
\put(12,3.75){\makebox(0,0){$k+2$}}
\put(10,3.75){\makebox(0,0){$k+3$}}
\put(8,3.75){\makebox(0,0){$k+4$}}
\put(6,3.35){\makebox(0,0){$x_{1}$}}
\put(4,3.35){\makebox(0,0){$x_{k-1}$}}
\put(2,3.35){\makebox(0,0){$x_{k}$}}
\put(6,3.75){\makebox(0,0){$k+4$}}
\put(4,3.75){\makebox(0,0){$k+4$}}
\put(2,3.75){\makebox(0,0){$k+4$}}
\put(14,0.45){\makebox(0,0){$b_{1}$}}
\put(12,0.45){\makebox(0,0){$b_{2}$}}
\put(10,0.45){\makebox(0,0){$b_{3}$}}
\put(8,0.45){\makebox(0,0){$b_{4}$}}
\put(8,0){\makebox(0,0){$k+6$}}
\put(10,0){\makebox(0,0){$k+5$}}
\put(12,0){\makebox(0,0){$k+2$}}
\put(14,0){\makebox(0,0){$k+2$}}
\put(6,0.45){\makebox(0,0){$y_{1}$}}
\put(4,0.45){\makebox(0,0){$y_{k-1}$}}
\put(6,0){\makebox(0,0){$k+6$}}
\put(4,0){\makebox(0,0){$k+6$}}
\put(2,0.45){\makebox(0,0){$y_{k}$}}
\put(2,0){\makebox(0,0){$k+5$}}
\put(0.7,2){\makebox(0,0){$G_{k}$}}
\end{picture}\caption{$H_{k}$ is a non-bipartite K\"{o}nig-Egerv\'{a}ry graph
with $\alpha\left(  H_{k}\right)  =k+4,k\geq0$.}%
\label{fig717}%
\end{figure}
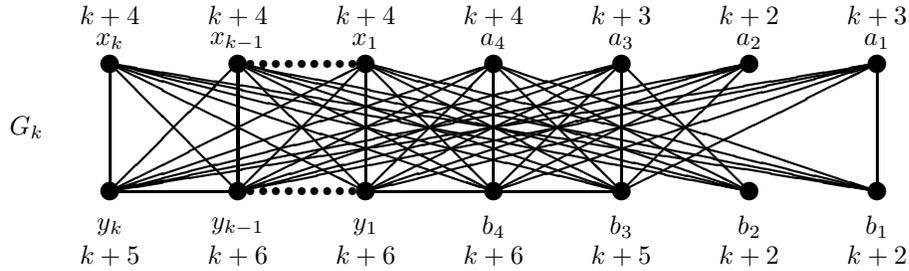
\end{lemma}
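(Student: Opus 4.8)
The plan is to prove the two assertions—König–Egerváry and uniqueness—separately, using a perfect matching for the former and a degree‑forcing argument for the latter. First I would exhibit an explicit perfect matching $M$ of $H_{k}$: pair each $x_{i}$ with $y_{i}$ (the vertical edges of Figure \ref{fig717}) and append four edges matching $a_{1},a_{2},a_{3},a_{4}$ to $b_{1},b_{2},b_{3},b_{4}$; this has $k+4$ edges, so $\mu(H_{k})=k+4=n(H_{k})/2$. I would then read off from the picture that the top set $S_{k}=\{x_{1},\dots,x_{k},a_{1},a_{2},a_{3},a_{4}\}$ spans no edge—every edge leaving it lands in the complementary bottom set $V(H_{k})-S_{k}=\{y_{1},\dots,y_{k},b_{1},\dots,b_{4}\}$—so $S_{k}$ is independent and $\alpha(H_{k})\geq k+4$. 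Since $\alpha+\mu\leq n=2k+8$ always holds, we obtain $\alpha(H_{k})=\mu(H_{k})=k+4$ and $\alpha(H_{k})+\mu(H_{k})=n(H_{k})$, i.e.\ $H_{k}$ is a König–Egerváry graph with $\alpha(H_{k})=k+4$.

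For uniqueness the crucial observation is a degree count: each $x_{i}$ satisfies $\deg(x_{i})=k+4=\left\vert V(H_{k})-S_{k}\right\vert$, and because $S_{k}$ is independent all neighbours of $x_{i}$ lie in $V(H_{k})-S_{k}$; hence $N(x_{i})=V(H_{k})-S_{k}$ exactly, so each $x_{i}$ is adjacent to every bottom vertex. Now let $S\in\Omega(H_{k})$. As $H_{k}$ is König–Egerváry with the perfect matching $M$ and $\left\vert S\right\vert=\mu(H_{k})$, the set $S$ contains exactly one endpoint of each edge of $M$. If $x_{i}\in S$ for some $i$, then $S\cap(V(H_{k})-S_{k})=\emptyset$, whence $S\subseteq S_{k}$ and, by cardinality, $S=S_{k}$.

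It remains to rule out an $S$ avoiding every $x_{i}$. In that case the matching edges $x_{i}y_{i}$ force $\{y_{1},\dots,y_{k}\}\subseteq S$, so $S\cap\{a_{1},\dots,a_{4},b_{1},\dots,b_{4}\}$ is an independent set of size four in the core $H_{0}=H_{k}-\{x_{j},y_{j}:1\le j\le k\}$. The plan is to establish the base fact that $S_{0}=\{a_{1},a_{2},a_{3},a_{4}\}$ is the only maximum independent set of $H_{0}$; this is an eight‑vertex inspection, again powered by a degree equality, since in $H_{0}$ one has $\deg(a_{4})=4=\left\vert\{b_{1},b_{2},b_{3},b_{4}\}\right\vert$, forcing $N(a_{4})=\{b_{1},b_{2},b_{3},b_{4}\}$, so every maximum independent set of $H_{0}$ that contains $a_{4}$ equals $S_{0}$, and the few sub‑cases avoiding $a_{4}$ are dismissed by hand. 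Granting this, $S\cap\{a_{1},\dots,a_{4},b_{1},\dots,b_{4}\}=\{a_{1},\dots,a_{4}\}$; but $y_{k}$ is adjacent to each $a_{\ell}$, contradicting $y_{k}\in S$. (For $k=0$ there are no $x_{i}$, so only this second case occurs and it coincides with the base fact, yielding $S=S_{0}$ with no contradiction.) Hence $\Omega(H_{k})=\{S_{k}\}$.

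The steps I expect to be routine are the matching and the independence of $S_{k}$; the delicate part is entirely figure‑bookkeeping. I must confirm from Figure \ref{fig717} the three adjacency facts the argument rests on—that each $x_{i}$ meets all of $V(H_{k})-S_{k}$ (clean, via the degree equality), that $y_{k}$ is joined to all four $a_{\ell}$, and that $S_{0}$ is the unique maximum independent set of $H_{0}$. Verifying the last of these—that no size‑four independent set of $H_{0}$ other than $S_{0}$ exists—is the main obstacle, since it cannot be outsourced to Theorem \ref{th2} (which is bipartite and therefore inapplicable to the non‑bipartite $H_{k}$) and must instead be settled directly from the adjacencies among $a_{1},\dots,a_{4},b_{1},\dots,b_{4}$.
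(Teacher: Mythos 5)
Your proposal is correct, and its uniqueness argument takes a genuinely different route from the paper's. The paper decomposes $H_{k}$ into two spanning induced pieces: $L_{k}=H_{k}\left[ \{x_{1},\dots,x_{k}\}\cup\{y_{1},\dots,y_{k}\}\right]$, which contains $K_{k,k}$ together with the path $y_{k}y_{k-1}\cdots y_{1}$ and hence has $\{x_{1},\dots,x_{k}\}$ as its unique maximum independent set, and $L_{0}=H_{k}\left[ A\cup B\right]$, which contains $C_{8}+b_{3}b_{4}$ as a spanning subgraph and hence has $A$ as its unique maximum independent set; it then uses the principle that adding edges while keeping $S_{k}$ independent preserves both $\alpha$ and uniqueness. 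You instead exploit the perfect matching directly (every $S\in\Omega(H_{k})$ meets each matching edge in exactly one endpoint) together with the degree-saturation observation $\deg(x_{i})=k+4=\left\vert V(H_{k})-S_{k}\right\vert$, which forces $N(x_{i})=V(H_{k})-S_{k}$; this collapses the problem to the eight-vertex core. Both arguments are sound; yours is more self-contained and makes the K\"{o}nig--Egerv\'{a}ry structure do the work, while the paper's spanning-subgraph trick avoids any case analysis on the core. The one step you leave as a finite verification --- that $S_{0}$ is the unique maximum independent set of $H_{0}$ --- does go through: if $a_{4}\in S$ then $N(a_{4})=\{b_{1},b_{2},b_{3},b_{4}\}$ forces $S=S_{0}$, and a set avoiding $a_{4}$ that contains $b_{3}$ or $b_{4}$ excludes all remaining $a_{i}$ and so has at most three vertices, while a set inside $\{a_{1},a_{2},a_{3},b_{1},b_{2}\}$ is blocked by the edges $a_{1}b_{1}$ and $a_{3}b_{2}$. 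The paper's observation that $C_{8}+b_{3}b_{4}$ (on the cycle $a_{1}b_{4}a_{2}b_{3}a_{3}b_{2}a_{4}b_{1}$) has $A$ as its only maximum independent set is a slicker way to discharge exactly this check. A small bonus of your route worth recording: for $k\geq2$ the case $S\cap\{x_{1},\dots,x_{k}\}=\emptyset$ is already impossible because $y_{1},\dots,y_{k}$ induce a path and cannot all lie in an independent set.
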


\begin{proof}
Clearly, $S_{k}=\left\{  x_{k},...,x_{1},a_{4},a_{3},a_{2},a_{1}\right\}  $ is
an independent set and
\[
\left\{  x_{j}y_{j}:j=1,2,...,k\right\}  \cup\left\{  a_{4}b_{4},a_{3}%
b_{2},a_{3}b_{3},a_{1}b_{1}\right\}
\]
is a perfect matching of $H_{k}$. Hence, we get%
\[
\left\vert V_{k}\right\vert =2\mu\left(  H_{k}\right)  =\left\vert
S_{k}\right\vert +\mu\left(  H_{k}\right)  \leq\alpha\left(  H_{k}\right)
+\mu\left(  H_{k}\right)  \leq\left\vert V_{k}\right\vert ,
\]
which implies $\alpha\left(  H_{k}\right)  +\mu\left(  H_{k}\right)
=\left\vert V_{k}\right\vert $, i.e., $H_{k}$ is a K\"{o}nig-Egerv\'{a}ry
graph, and $\alpha\left(  H_{k}\right)  =k+4=\left\vert S_{k}\right\vert $.

Let $L_{k}=H_{k}\left[  X_{k}\cup Y_{k}\right]  ,k\geq1$, and $L_{0}%
=H_{k}\left[  A\cup B\right]  $, where
\begin{align*}
X_{k}  &  =\left\{  x_{j}:j=1,...,k\right\}  ,Y_{k}=\left\{  y_{j}%
:j=1,...,k\right\}  ,\\
A  &  =\left\{  a_{1},a_{2},a_{3},a_{4}\right\}  \text{ and }B=\left\{
b_{1},b_{2},b_{3},b_{4}\right\}  .
\end{align*}
Since $L_{k}$ has, on the one hand, $K_{k,k}$ as a subgraph, and, on the other
hand,
\[
y_{k}y_{k-1},y_{k-1}y_{k-2},...,y_{2}y_{1}\in E\left(  L_{k}\right)  ,
\]
it follows that $X_{k}$ is the unique maximum independent set of $L_{k}$.

The graph $L_{0}$ has $A$ as a unique independent set, because
\[
C_{8}+b_{3}b_{4}=\left(  A\cup B,\left\{  a_{1}b_{4},b_{4}a_{2},a_{2}%
b_{3},b_{3}a_{3},a_{3}b_{2},b_{2}a_{4},a_{4}b_{1},b_{1}a_{1},b_{3}%
b_{4}\right\}  \right)
\]
has $A$ as a unique maximum independent set, and $L_{0}$ can be obtained from
$C_{8}+b_{3}b_{4}$ by adding a number of edges.

Since $H_{k}$ can be obtained from the union of $L_{k}$ and $L_{0}$ by adding
some edges, and $S_{k}=X_{k}\cup A$ is independent in $H_{k}$, it follows that
$H_{k}$ has $S_{k}$ as a unique maximum independent set.
\end{proof}

\begin{corollary}
\label{Cor1}The graph $H_{k},k\geq0$, from Figure \ref{fig727} is a
K\"{o}nig-Egerv\'{a}ry graph that has a unique \textit{independent set,
namely, }$S_{k}=\left\{  x_{i}:i=1,...,k\right\}  \cup\left\{  a_{i}%
:i=1,...,5\right\}  $\textit{, where }$H_{0}=H_{k}-\left\{  x_{j}%
,y_{j}:j=1,2,...,k\right\}  $\ and\textit{ }$S_{0}=\left\{  a_{i}%
:i=1,...,5\right\}  $\textit{.}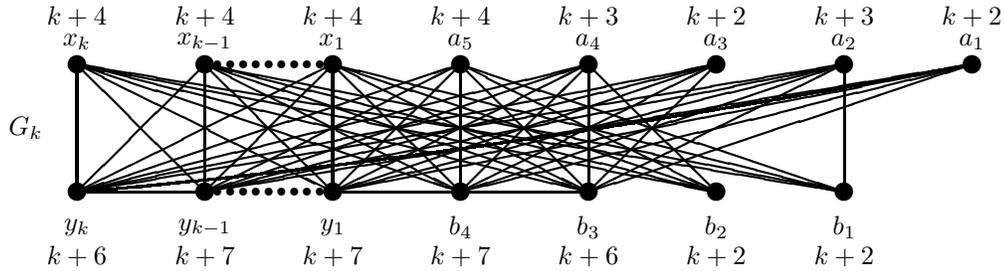
\begin{figure}[h]
\setlength{\unitlength}{0.85cm}\begin{picture}(5,4)\thicklines
\multiput(1,1)(2,0){7}{\circle*{0.29}}
\multiput(1,3)(2,0){8}{\circle*{0.29}}
\qbezier(1,1)(7,1.5)(15,3)
\put(3,1){\line(6,1){12}}
\put(5,1){\line(5,1){10}}
\put(7,1){\line(4,1){8}}
\put(9,1){\line(3,1){6}}
\multiput(3,1)(0.2,0){10}{\circle*{0.12}}
\multiput(3,3)(0.2,0){10}{\circle*{0.12}}
\put(5,1){\line(1,0){4}}
\put(1,1){\line(0,1){2}}
\put(1,1){\line(1,0){2}}
\put(1,1){\line(1,1){2}}
\put(1,1){\line(2,1){4}}
\put(1,1){\line(3,1){6}}
\put(1,1){\line(4,1){8}}
\put(1,1){\line(5,1){10}}
\put(1,1){\line(6,1){12}}
\put(1,3){\line(1,-1){2}}
\put(1,3){\line(2,-1){4}}
\put(1,3){\line(3,-1){6}}
\put(1,3){\line(4,-1){8}}
\put(1,3){\line(5,-1){10}}
\put(1,3){\line(6,-1){12}}
\put(3,1){\line(0,1){2}}
\put(3,1){\line(1,1){2}}
\put(3,1){\line(2,1){4}}
\put(3,1){\line(3,1){6}}
\put(3,1){\line(4,1){8}}
\put(3,1){\line(5,1){10}}
\put(3,3){\line(1,-1){2}}
\put(3,3){\line(2,-1){4}}
\put(3,3){\line(3,-1){6}}
\put(3,3){\line(4,-1){8}}
\put(3,3){\line(5,-1){10}}
\put(5,1){\line(0,1){2}}
\put(5,1){\line(1,1){2}}
\put(5,1){\line(2,1){4}}
\put(5,1){\line(3,1){6}}
\put(5,1){\line(4,1){8}}
\put(5,3){\line(1,-1){2}}
\put(5,3){\line(2,-1){4}}
\put(5,3){\line(3,-1){6}}
\put(5,3){\line(4,-1){8}}
\put(13,1){\line(0,1){2}}
\put(7,1){\line(0,1){2}}
\put(7,1){\line(1,1){2}}
\put(7,1){\line(2,1){4}}
\put(7,1){\line(3,1){6}}
\put(7,3){\line(1,-1){2}}
\put(7,3){\line(2,-1){4}}
\put(7,3){\line(3,-1){6}}
\put(9,3){\line(1,-1){2}}
\put(9,1){\line(0,1){2}}
\put(9,1){\line(1,1){2}}
\put(9,1){\line(2,1){4}}
\put(15,3.35){\makebox(0,0){$a_{1}$}}
\put(13,3.35){\makebox(0,0){$a_{2}$}}
\put(11,3.35){\makebox(0,0){$a_{3}$}}
\put(9,3.35){\makebox(0,0){$a_{4}$}}
\put(7,3.35){\makebox(0,0){$a_{5}$}}
\put(15,3.75){\makebox(0,0){$k+2$}}
\put(13,3.75){\makebox(0,0){$k+3$}}
\put(11,3.75){\makebox(0,0){$k+2$}}
\put(9,3.75){\makebox(0,0){$k+3$}}
\put(7,3.75){\makebox(0,0){$k+4$}}
\put(5,3.35){\makebox(0,0){$x_{1}$}}
\put(3,3.35){\makebox(0,0){$x_{k-1}$}}
\put(1,3.35){\makebox(0,0){$x_{k}$}}
\put(5,3.75){\makebox(0,0){$k+4$}}
\put(3,3.75){\makebox(0,0){$k+4$}}
\put(1,3.75){\makebox(0,0){$k+4$}}
\put(13,0.45){\makebox(0,0){$b_{1}$}}
\put(11,0.45){\makebox(0,0){$b_{2}$}}
\put(9,0.45){\makebox(0,0){$b_{3}$}}
\put(7,0.45){\makebox(0,0){$b_{4}$}}
\put(7,0){\makebox(0,0){$k+7$}}
\put(9,0){\makebox(0,0){$k+6$}}
\put(11,0){\makebox(0,0){$k+2$}}
\put(13,0){\makebox(0,0){$k+2$}}
\put(5,0.45){\makebox(0,0){$y_{1}$}}
\put(3,0.45){\makebox(0,0){$y_{k-1}$}}
\put(5,0){\makebox(0,0){$k+7$}}
\put(3,0){\makebox(0,0){$k+7$}}
\put(1,0.45){\makebox(0,0){$y_{k}$}}
\put(1,0){\makebox(0,0){$k+6$}}
\put(0.2,2){\makebox(0,0){$G_{k}$}}
\end{picture}\caption{$H_{k}$ is a non-bipartite K\"{o}nig-Egerv\'{a}ry graph
with $\alpha\left(  G_{k}\right)  =k+5,k\geq0$.}%
\label{fig727}%
\end{figure}
\end{corollary}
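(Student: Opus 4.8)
The plan is to follow, step by step, the argument used for Lemma~\ref{lem6}, adapting it to two new features of the picture: the core gadget $H_0=H_k[A\cup B]$ now carries five ``$a$-vertices'' $A=\{a_1,\dots,a_5\}$ against the four ``$b$-vertices'' $B=\{b_1,\dots,b_4\}$, and $H_k$ has odd order $2k+9$, so a perfect matching is no longer available.

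First I would establish the K\"{o}nig-Egerv\'{a}ry property together with the value $\alpha\left(H_k\right)=k+5$. To this end I exhibit the candidate $S_k=\{x_1,\dots,x_k,a_1,\dots,a_5\}$, which is independent by construction, and a matching $M=\{x_jy_j:j=1,\dots,k\}\cup M_0$, where $M_0$ is a size-$4$ matching of $H_0$ saturating every vertex of $A\cup B$ except one (an analogue of the matching $\{a_5b_4,a_4b_2,a_3b_3,a_2b_1\}$ used for Lemma~\ref{lem3}). Then $\alpha\left(H_k\right)\ge\left\vert S_k\right\vert =k+5$ and $\mu\left(H_k\right)\ge\left\vert M\right\vert =k+4$, so the universal bound $\alpha+\mu\le n$ gives
\[
2k+9=(k+5)+(k+4)\le\alpha\left(H_k\right)+\mu\left(H_k\right)\le\left\vert V\left(H_k\right)\right\vert =2k+9,
\]
forcing equality throughout. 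Hence $H_k$ is a K\"{o}nig-Egerv\'{a}ry graph with $\alpha\left(H_k\right)=k+5$ and $\mu\left(H_k\right)=k+4$.

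Next I would prove that $S_k$ is the unique maximum independent set, again mirroring Lemma~\ref{lem6}. Set $X_k=\{x_1,\dots,x_k\}$, $Y_k=\{y_1,\dots,y_k\}$, $L_k=H_k[X_k\cup Y_k]$ and $L_0=H_k[A\cup B]$. Exactly as in Lemma~\ref{lem6}, $L_k$ contains $K_{k,k}$ between $X_k$ and $Y_k$ while $Y_k$ induces a path, so $X_k$ is the unique maximum independent set of $L_k$. Since $H_k$ arises from the disjoint union of $L_k$ and $L_0$ by adding edges, and $S_k=X_k\cup A$ is independent in $H_k$, any maximum independent set $I$ of $H_k$ satisfies $\left\vert I\cap(X_k\cup Y_k)\right\vert\le k$ and $\left\vert I\cap(A\cup B)\right\vert\le\alpha\left(L_0\right)$; it therefore remains only to show that $A$ is the unique maximum independent set of $L_0$, of size $5$.

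This last point is the main obstacle, because with five $a$-vertices and four $b$-vertices the backbone of $L_0$ is no longer a single copy of $C_8$, as it was in Lemma~\ref{lem6}. The plan is to isolate inside $L_0$ the tree $T_0$ on $A\cup B$ underlying Figure~\ref{fig122}: all its leaves lie in $A$, so by Theorem~\ref{th2}\emph{(i)} it is a strong unique maximum independence tree with $\Omega\left(T_0\right)=\{A\}$, exactly as in the proof of Lemma~\ref{lem3}. Restoring the remaining edges of $L_0$ -- among them the single $b$-$b$ edge that makes $L_0$ non-bipartite -- can only delete independent sets while keeping $A$ independent, so $A$ stays the unique maximum independent set of $L_0$ and $\alpha\left(L_0\right)=5$. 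Plugging this back, every maximum independent set of $H_k$ meets $A\cup B$ in $A$ and $X_k\cup Y_k$ in $X_k$, whence $S_k=X_k\cup A$ is the unique maximum independent set of $H_k$, as claimed.
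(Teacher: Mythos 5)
Your proof is correct, but it takes a genuinely different route from the paper's. The paper proves Corollary~\ref{Cor1} in two lines by reduction: it deletes $a_{5}$, identifies $H_{k}-a_{5}$ with the graph of Lemma~\ref{lem6} (Figure~\ref{fig717}), and then observes that $S_{k}=W_{k}\cup\{a_{5}\}$ is still independent while $\mu\left(H_{k}\right)=\mu\left(H_{k}-a_{5}\right)=k+4$, which simultaneously yields the K\"{o}nig-Egerv\'{a}ry property, $\alpha\left(H_{k}\right)=k+5$, and uniqueness. You instead rebuild the entire Lemma~\ref{lem6}-style argument from scratch: an explicit near-perfect matching plus the bound $\alpha+\mu\leq n$ for the K\"{o}nig-Egerv\'{a}ry property, the decomposition into $L_{k}=H_{k}[X_{k}\cup Y_{k}]$ and $L_{0}=H_{k}[A\cup B]$ for uniqueness, and --- where the two cores differ --- a spanning-tree argument for $L_{0}$ via Theorem~\ref{th2}\emph{(i)} applied to the tree $T_{0}$ of Figure~\ref{fig122}, in the spirit of Lemma~\ref{lem3}, rather than the paper's $C_{8}+b_{3}b_{4}$ gadget. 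What the paper's reduction buys is brevity, at the cost of requiring the reader to verify that Figure~\ref{fig727} minus $a_{5}$ really coincides with Figure~\ref{fig717}; your version is longer but self-contained and does not depend on that identification. Your treatment of $L_{0}$ is careful on the one point where it could go wrong: since $L_{0}$ is non-bipartite you cannot invoke Theorem~\ref{th2}\emph{(ii)}, and you correctly apply only part \emph{(i)} to the spanning tree and then use the monotonicity of ``unique maximum independent set'' under edge addition. One shared gloss with the paper: the claim that $X_{k}$ is the unique maximum independent set of $L_{k}$ silently assumes $k\geq2$ (for $k=1$ the set $\{y_{1}\}$ is also maximum in $L_{1}$, and one must then use the edge $y_{1}a_{1}$ to exclude it); a fully rigorous write-up should note this, but it does not affect the conclusion.
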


\begin{proof}
According to Lemma \ref{lem6}, $H_{k}-a_{5}$ is a K\"{o}nig-Egerv\'{a}ry graph
with a unique maximum independent set, namely, $W_{k}=\left\{  x_{i}%
:i=1,...,k\right\}  \cup\left\{  a_{i}:i=1,...,4\right\}  $\textit{.}
Since\textit{ }$S_{k}=W_{k}\cup\left\{  a_{5}\right\}  $ is an independent set
and $\mu\left(  H_{k}\right)  =\mu\left(  H_{k}-a_{5}\right)  =k+4$, it
follows that $H_{k}$ is a K\"{o}nig-Egerv\'{a}ry graph and $S_{k}$ is its
unique maximum independent set.
\end{proof}

The following results show that if the order of the graph is greater or equal
to $8$, then the converse of Theorem \ref{Th9} is not true for non-bipartite
K\"{o}nig-Egerv\'{a}ry graphs.

\begin{theorem}
\label{Th10}For every $k\geq0$, there exists a connected non-bipartite
K\"{o}nig-Egerv\'{a}ry graph $H_{k}=(V_{k},E_{k})$, of order $2k+8$,
satisfying the following:

\begin{itemize}
\item $h\left(  H_{k}\right)  >\frac{n\left(  H_{k}\right)  }{2}=\alpha\left(
H_{k}\right)  $,

\item each $S\in\Omega\left(  H_{k}\right)  $ is a maximal annihilating set.
\end{itemize}
\end{theorem}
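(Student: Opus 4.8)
The plan is to treat this as a quantitative refinement of Theorems \ref{th5} and \ref{th6}, feeding off the structural information already secured in Lemma \ref{lem6}. That lemma tells us $H_k$ is a K\"{o}nig-Egerv\'{a}ry graph whose unique maximum independent set is $S_k=\{x_k,\dots,x_1,a_4,a_3,a_2,a_1\}$ with $\alpha(H_k)=k+4$; since $n(H_k)=2k+8$ and $H_k$ carries a perfect matching, this already yields $\tfrac{n(H_k)}{2}=k+4=\alpha(H_k)$ and reduces the second bullet to a single verification, namely that $S_k$ is a maximal annihilating set. So the only genuinely new computations concern $m(H_k)$, $h(H_k)$, and the maximality of $S_k$.

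First I would read the degree sequence off Figure \ref{fig717} and sort it: three vertices of degree $k+2$ (namely $a_2,b_1,b_2$), two of degree $k+3$ ($a_1,a_3$), $k+1$ of degree $k+4$ ($a_4$ and $x_1,\dots,x_k$), two of degree $k+5$ ($y_k,b_3$), and $k$ of degree $k+6$ ($b_4$ and $y_1,\dots,y_{k-1}$). Summing gives $2m(H_k)=2k^2+18k+26$, i.e. $m(H_k)=k^2+9k+13$. Then I would compute $h(H_k)$ from prefix sums: the first $k+5$ terms total $3(k+2)+2(k+3)+k(k+4)=k^2+9k+12\le m(H_k)$, whereas the next term $k+4$ pushes the sum to $k^2+10k+16>m(H_k)$. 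Hence $h(H_k)=k+5>k+4=\tfrac{n(H_k)}{2}=\alpha(H_k)$, which settles the first bullet.

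For the second bullet, since $\Omega(H_k)=\{S_k\}$ it suffices to show $S_k$ is maximal annihilating. Being independent, $S_k$ is annihilating, and summing the $x$- and $a$-degrees gives $\deg(S_k)=k(k+4)+(k+3)+(k+2)+(k+3)+(k+4)=k^2+8k+12$, so the available slack is $m(H_k)-\deg(S_k)=k+1$. Every vertex $v\notin S_k$ lies in $\{y_1,\dots,y_k,b_1,\dots,b_4\}$ and therefore has $\deg(v)\ge k+2$; consequently $\deg(S_k\cup\{v\})=\deg(S_k)+\deg(v)>m(H_k)$ for each such $v$, so $S_k$ is maximal. Note that, since $|S_k|=k+4<k+5=h(H_k)$, it is in fact a maximal \emph{non-maximum} annihilating set, which is exactly what makes it a counterexample to the converse direction.

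The place to be careful, and where I expect the real obstacle, is precisely these two tight estimates. The prefix sum equals $m(H_k)-1$, so the identity $h(H_k)=k+5$ is razor-thin and hinges on getting the exact multiplicities in the degree multiset right, including the tie-breaking among equal degrees. Likewise, unlike in Theorems \ref{th5} and \ref{th6}, here $S_k$ is \emph{not} a vertex cover (the set $\{y_1,\dots,y_k,b_1,\dots,b_4\}$ spans $k+1$ edges), so $\deg(S_k)<m(H_k)$ and Lemma \ref{lem7} is unavailable; maximality rests entirely on the sharp comparison $\min\{\deg(v):v\notin S_k\}=k+2>k+1$. Finally I would dispose of the degenerate index $k=0$ (where the $(k+6)$-block is empty and the $(k+4)$-block is a singleton, so the multiset collapses to $(2,2,2,3,3,4,5,5)$) by the same arithmetic, checking directly that $m=13$, $h=5>4$, $\deg(S_0)=12$, and every vertex outside $S_0$ has degree at least $2$.
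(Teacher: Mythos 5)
Your proposal is correct and follows essentially the same route as the paper: invoke Lemma \ref{lem6} for the K\"{o}nig-Egerv\'{a}ry property and the uniqueness of $S_k$, read off the degree sequence $k+2,k+2,k+2,k+3,k+3,(k+4)^{k+1},k+5,k+5,(k+6)^{k}$ to get $m(H_k)=k^2+9k+13$ and $h(H_k)=k+5$, and then verify maximality of $S_k$ from $\deg(S_k)=k^2+8k+12$ together with $\min\{\deg(v):v\notin S_k\}=k+2$. Your direct prefix-sum evaluation at $k+5$ and $k+6$ is a slightly cleaner way to pin down $h(H_k)$ than the paper's inequality $12+4(x-5)+kx\leq m(H_k)$, but it is the same argument in substance, including the separate treatment of $k=0$.
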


\begin{proof}
Let $H_{k}=(V_{k},E_{k}),k\geq0$, be the graph from Figure \ref{fig717} (in
the bottom and the top lines are written the degrees of its vertices), where
$H_{0}=H_{k}-\left\{  x_{1},...x_{k},y_{1},...,y_{k}\right\}  $. Clearly,
every $H_{k}$ is non-bipartite.

By Lemma \ref{lem6}, each $H_{k},k\geq0$, is a K\"{o}nig-Egerv\'{a}ry graph
with a unique maximum independent set, namely, $S_{k}=\left\{  x_{k}%
,...,x_{1},a_{4},a_{3},a_{2},a_{1}\right\}  $, where $S_{0}=\left\{
a_{4},a_{3},a_{2},a_{1}\right\}  $.

\textit{Case 1}. $k=0$. Since $m\left(  H_{0}\right)  =13$ and the degree
sequence $\left(  2,2,2,3,3,4,5,5\right)  $, we infer that $h\left(
H_{0}\right)  =5>4=\alpha\left(  H_{0}\right)  $. In addition, $\deg\left(
S_{0}\right)  =m\left(  H_{0}\right)  -1$, i.e., each maximum independent set
of $H_{0}$ is a maximal non-maximum annihilating set.

\textit{Case 2}. $k\geq1$.

Clearly, $H_{k}$ has $m\left(  G_{k}\right)  =k^{2}+9k+13$ and its degree
sequence is
\[
k+2,k+2,k+2,k+3,k+3,\underset{k+1}{\underbrace{k+4,\cdots,k+4}}%
,k+5,k+5,\underset{k}{\underbrace{k+6,\cdots,k+6}}.
\]
Since the sum of the first $k+6$ degrees of the sequence satisfies
\[
k^{2}+10k+16>m\left(  H_{k}\right)  ,
\]
we infer that the annihilation number $h\left(  H_{k}\right)  \leq k+6$. The
sum $12+4\left(  x-5\right)  +kx$ of the first $x\geq5$ degrees of the
sequence satisfies $12+4\left(  x-5\right)  +kx\leq m\left(  H_{k}\right)  $
for $x\leq\frac{k^{2}+9k+21}{k+4}$. This implies
\[
h\left(  H_{k}\right)  =\left\lfloor \frac{k^{2}+9k+21}{k+4}\right\rfloor
=k+5>k+4=\alpha\left(  H_{k}\right)  \text{,}%
\]
i.e., $H_{k}$ has no maximum annihilating set belonging to $\Omega\left(
H_{k}\right)  $. Since its unique maximum independent set $S_{k}=\left\{
a_{1},a_{2},a_{3},a_{4},x_{1},x_{2},...,x_{k}\right\}  $ has
\begin{gather*}
\deg\left(  S_{k}\right)  =k^{2}+8k+12<m\left(  H_{k}\right)  \text{, while}\\
\deg\left(  S_{k}\right)  +\min\{\deg\left(  v\right)  :v\in V_{k}-S\}=\left(
k^{2}+8k+12\right)  +\left(  k+2\right)  >m\left(  H_{k}\right)  ,
\end{gather*}
we infer that $S_{k}$ is a maximal annihilating set.
\end{proof}

\begin{theorem}
\label{Th11}For every $k\geq0$, there exists a connected non-bipartite
K\"{o}nig-Egerv\'{a}ry graph $H_{k}=(V_{k},E_{k})$, of order $2k+9$,
satisfying the following:

\begin{itemize}
\item $h\left(  H_{k}\right)  >\left\lceil \frac{n\left(  H_{k}\right)  }%
{2}\right\rceil =\alpha\left(  H_{k}\right)  $,

\item each $S\in\Omega\left(  H_{k}\right)  $ is a maximal annihilating set.
\end{itemize}
\end{theorem}

\begin{proof}
Let $H_{k}=(V_{k},E_{k}),k\geq1$, be the graph from Figure \ref{fig727} (in
the bottom and the top lines are written the degrees of its vertices), and
$H_{0}=H_{k}-\left\{  x_{1},...x_{k},y_{1},...,y_{k}\right\}  $.

Corollary \ref{Cor1} claims that $H_{k},k\geq0$, is a K\"{o}nig-Egerv\'{a}ry
graph with a unique maximum independent set, namely $S_{k}=\left\{
x_{1},...x_{k},a_{1},...,a_{5}\right\}  ,k\geq1$, and $S_{0}=\left\{
a_{1},...,a_{5}\right\}  $.

\textit{Case 1}. The non-bipartite K\"{o}nig-Egerv\'{a}ry graph $H_{0}$ has
$m\left(  H_{0}\right)  =15$ and the degree sequence $\left(
2,2,2,2,3,3,4,6,6\right)  $. Hence, $h\left(  H_{0}\right)  =6>5=\alpha\left(
G_{0}\right)  $. In addition, $\Omega\left(  H_{0}\right)  =\left\{
S_{0}\right\}  $, and $\deg\left(  S_{0}\right)  =14$, i.e., each maximum
independent set of $H_{0}$ is a maximal non-maximum annihilating set.

\textit{Case 2}. $k\geq1$.

Clearly, $H_{k}$ has $m\left(  H_{k}\right)  =k^{2}+10k+15$ and its degree
sequence is
\[
k+2,k+2,k+2,k+2,k+3,k+3,\underset{k+1}{\underbrace{k+4,\cdots,k+4}%
},k+5,k+5,\underset{k}{\underbrace{k+6,\cdots,k+6}}.
\]
Since the sum of the first $k+7$ degrees of the sequence satisfies \
\[
k^{2}+11k+18>m\left(  H_{k}\right)  ,
\]
we infer that the annihilation number $h\left(  H_{k}\right)  \leq k+6$. The
sum $14+4\left(  x-5\right)  +kx$ of the first $x\geq6$ degrees of the
sequence satisfies $14+4\left(  x-6\right)  +kx\leq m\left(  H_{k}\right)  $
for $x\leq\frac{k^{2}+10k+25}{k+4}$. This implies
\[
h\left(  H_{k}\right)  =\left\lfloor \frac{k^{2}+10k+25}{k+4}\right\rfloor
=k+6>k+5=\alpha\left(  H_{k}\right)  \text{,}%
\]
i.e., $H_{k}$ has no maximum annihilating set belonging to $\Omega\left(
H_{k}\right)  $. Since its unique maximum independent set $S_{k}$ has
\begin{gather*}
\deg\left(  S_{k}\right)  =k^{2}+9k+14<m\left(  G_{k}\right)  \text{, while}\\
\deg\left(  S_{k}\right)  +\min\{\deg\left(  v\right)  :v\in V_{k}%
-S_{k}\}=\left(  k^{2}+9k+14\right)  +\left(  k+2\right)  >m\left(
G_{k}\right)  ,
\end{gather*}
we infer that $S_{k}$ is a maximal annihilating set.
\end{proof}

\section{Conclusions}

If $G$ is a K\"{o}nig-Egerv\'{a}ry graph with $\alpha\left(  G\right)
\in\left\{  1,2\right\}  $, then $\alpha\left(  G\right)  =h\left(  G\right)
$ and each maximum independent set is maximal annihilating, since the list of
such K\"{o}nig-Egerv\'{a}ry graphs reads as follows:%

\[
\left\{  K_{1},K_{2},K_{1}\cup K_{1},K_{1}\cup K_{2},K_{2}\cup K_{2}%
,P_{3},P_{4},C_{4},K_{3}+e,K_{4}-e\right\}  .
\]
Consequently, Conjecture \ref{Conj1} is correct for K\"{o}nig-Egerv\'{a}ry
graphs with $\alpha\left(  G\right)  \leq2$.\begin{figure}[h]
\setlength{\unitlength}{1cm}\begin{picture}(5,1)\thicklines
\multiput(5,1)(1,0){2}{\circle*{0.29}}
\multiput(5,0)(1,0){2}{\circle*{0.29}}
\put(5,0){\line(1,1){1}}
\put(5,0){\line(0,1){1}}
\put(5,0){\line(1,0){1}}
\put(6,0){\line(0,1){1}}
\put(4.2,0.5){\makebox(0,0){$G_{1}$}}
\multiput(8,0)(1,0){2}{\circle*{0.29}}
\multiput(8,1)(1,0){2}{\circle*{0.29}}
\put(8,0){\line(1,1){1}}
\put(8,1){\line(1,-1){1}}
\put(8,0){\line(0,1){1}}
\put(8,0){\line(1,0){1}}
\put(9,0){\line(0,1){1}}
\put(7.2,0.5){\makebox(0,0){$G_{2}$}}
\end{picture}\caption{$G_{1}=K_{3}+e$ and $G_{2}=K_{4}-e$.}%
\label{fig88}%
\end{figure}
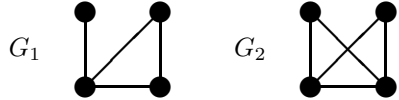

Let $G$ be a disconnected K\"{o}nig-Egerv\'{a}ry graph with $\alpha\left(
G\right)  =3$.

\begin{itemize}
\item If $\alpha\left(  G\right)  =h\left(  G\right)  $, then
\[
G\in\left\{
\begin{array}
[c]{c}%
3K_{1},2K_{1}\cup K_{2},K_{1}\cup2K_{2},3K_{2},K_{1}\cup P_{3},K_{1}\cup
P_{4},\\
K_{1}\cup C_{4},K_{1}\cup\left(  K_{3}+e\right)  ,K_{1}\cup\left(
K_{4}-e\right)  ,K_{2}\cup P_{3},K_{2}\cup C_{4}%
\end{array}
\right\}  ,
\]
while every $S\in\Omega\left(  G\right)  $ is a maximal annihilating set.

\item If $\alpha\left(  G\right)  <h\left(  G\right)  $, then $G\in\left\{
K_{2}\cup P_{4},K_{2}\cup\left(  K_{3}+e\right)  ,K_{2}\cup\left(
K_{4}-e\right)  \right\}  $, while for every such $G$, there exists a maximum
independent set, which is a not a maximal annihilating set. Moreover, for
$K_{2}\cup\left(  K_{3}+e\right)  $ and $K_{2}\cup\left(  K_{4}-e\right)  $
all maximum independent sets are not maximal annihilating.
\end{itemize}

Thus Conjecture \ref{Conj1} is true for disconnected K\"{o}nig-Egerv\'{a}ry
graphs with $\alpha\left(  G\right)  =3$.

On the other hand, Theorems \ref{Th12}, \ref{th5}, \ref{th6}, \ref{Th10},
\ref{Th11} present various counterexamples to the \textquotedblright%
\textit{(ii)} $\Longrightarrow$ \textit{(i)}\textquotedblleft\ part of
Conjecture \ref{Conj1} for every independence number greater than three.

\begin{conjecture}
Let $G$ be a graph with $h\left(  G\right)  \geq\frac{n\left(  G\right)  }{2}%
$. If $G$ is a connected K\"{o}nig-Egerv\'{a}ry graph with $\alpha\left(
G\right)  =3$, and every $S\in\Omega(G)$ is a maximal annihilating set, then
$\alpha\left(  G\right)  =h\left(  G\right)  $.
\end{conjecture}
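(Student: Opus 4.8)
The plan is to collapse the statement to a finite check and then settle the single nontrivial order by degree counting. Since $G$ is a König-Egerv\'ary graph, $\mu(G)\le n(G)/2$ gives $\alpha(G)=n(G)-\mu(G)\ge n(G)/2$, so $\alpha(G)=3$ forces $n(G)\le 6$, while connectedness rules out $n(G)\le 3$. Hence $n(G)\in\{4,5,6\}$, and since $h(G)\ge\alpha(G)=3$ always holds by Theorem \ref{th4}, it suffices to prove $h(G)\le 3$. First I would dispose of the small orders directly, without even invoking the maximal-annihilating hypothesis. For $n(G)=4$, connectivity gives $m(G)>0$, so $\sum_{i=1}^{4}d_{i}=2m(G)>m(G)$ and $h(G)<4$. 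For $n(G)=5$, the defining inequality $\sum_{i=1}^{4}d_{i}\le m(G)$ of $h(G)\ge 4$ rewrites as $\sum_{i=1}^{4}d_{i}\le d_{5}$; since $d_{5}\le 4$ and $\sum_{i=1}^{4}d_{i}\ge 4$, this forces $d_{1}=\cdots=d_{4}=1$ and $d_{5}=4$, i.e.\ four leaves, which are pairwise nonadjacent and give $\alpha(G)\ge 4$, a contradiction.

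The heart of the matter is $n(G)=6$, where the minimum vertex-cover number is $\tau(G)=n(G)-\alpha(G)=3$. The key reformulation is that, because $\sum_{i=1}^{6}d_{i}=2m(G)$, the condition $h(G)\ge 4$, that is $\sum_{i=1}^{4}d_{i}\le m(G)$, is \emph{equivalent} to $d_{5}+d_{6}\ge m(G)$: the two largest degrees already account for at least half of all edge-endpoints. I would argue by contradiction, assuming $h(G)\ge 4$, and let $u,w$ be two distinct vertices realizing the two largest degree values, so that $\deg(u)+\deg(w)=d_{5}+d_{6}\ge m(G)$.

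Next I would show this near-domination is incompatible with $\tau(G)=3$ except for a rigid local picture. The number of edges with no endpoint in $\{u,w\}$ is $m(G)-\deg(u)-\deg(w)$ when $u\not\sim w$ and $m(G)-\deg(u)-\deg(w)+1$ when $u\sim w$. If $u\not\sim w$ this count is $\le 0$, hence $0$, so $\{u,w\}$ is a vertex cover of size $2$ and $\tau(G)\le 2$, a contradiction; thus $u\sim w$. Then the count is $m(G)-\deg(u)-\deg(w)+1\le 1$, and it cannot be $0$ (again a size-$2$ cover), so it equals $1$; therefore $\deg(u)+\deg(w)=m(G)$ and exactly one edge $xy$ lies entirely outside $\{u,w\}$. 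Writing $\{p,q\}=V(G)\setminus\{u,w,x,y\}$, the only edge among $\{p,q,x,y\}$ is $xy$, so $\{p,q,x\}$ is independent of size $3=\alpha(G)$, i.e.\ a maximum independent set. But $\deg(\{p,q,x\})+\deg(y)=\sum_{v\neq u,w}\deg(v)=2m(G)-(\deg(u)+\deg(w))=m(G)$, so $\{p,q,x\}\cup\{y\}$ is still annihilating and $\{p,q,x\}$ is \emph{not} a maximal annihilating set, contradicting the hypothesis. Hence $h(G)\le 3$, giving $\alpha(G)=h(G)$ in every case.

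I expect the apparent obstacle to be the profusion of maximum independent sets when $n(G)=6$ and $G$ is bipartite or nearly bipartite (the complement of a maximum independent set carrying few internal edges), where a direct case analysis on the edge structure is unpleasant and where one must use that \emph{every} $S\in\Omega(G)$ is maximal annihilating rather than a single convenient one. The device that dissolves this is precisely the equivalence $h(G)\ge 4\iff d_{5}+d_{6}\ge m(G)$ combined with $\tau(G)=3$: instead of chasing all the independent sets, one pins down the two highest-degree vertices and exhibits one explicit offending maximum independent set. The only point requiring care is the bookkeeping when several degrees are tied, so that $u$ and $w$ are not canonical; but any choice of two distinct vertices attaining the two largest degree \emph{values} makes the edge count above go through unchanged.
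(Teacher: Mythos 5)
Your argument is correct and complete, and it is worth stressing that the paper offers \emph{no} proof of this statement: it is left as an open conjecture, supported only by the exhaustive verifications for $\alpha(G)\leq2$ and for disconnected K\"{o}nig-Egerv\'{a}ry graphs with $\alpha(G)=3$ in the Conclusions section. Your reduction to $n(G)\in\{4,5,6\}$ via $\alpha(G)=n(G)-\mu(G)\geq n(G)/2$ is sound, and the small cases are handled correctly ($n=4$ needs only $m(G)>0$; $n=5$ with $h(G)\geq4$ forces the degree sequence $(1,1,1,1,4)$, i.e. $K_{1,4}$, whose independence number is $4$). The $n=6$ case is the genuine content, and every step checks: $h(G)\geq4$ is equivalent to $d_{5}+d_{6}\geq m(G)$ since the degrees sum to $2m(G)$; for distinct $u,w$ with $\deg(u)+\deg(w)=d_{5}+d_{6}$, the number of edges avoiding $\{u,w\}$ is $m(G)-\deg(u)-\deg(w)$ plus $1$ if $uw\in E(G)$, and since the minimum vertex cover has size $n(G)-\alpha(G)=3$ this count cannot be $0$, which forces $uw\in E(G)$, $\deg(u)+\deg(w)=m(G)$, and a unique uncovered edge $xy$; then $\{p,q,x\}$ is independent of size $3$, hence in $\Omega(G)$, and $\deg(\{p,q,x\})+\deg(y)=2m(G)-\deg(u)-\deg(w)=m(G)$ shows it is not a maximal annihilating set, contradicting the hypothesis. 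The only wording to tighten is the selection of $u$ and $w$: they should be two distinct vertices occupying the last two positions of the sorted degree sequence, so that $\deg(u)+\deg(w)=d_{5}+d_{6}$ as a sum of sequence \emph{entries} rather than of degree \emph{values}; this is always possible and is what your closing remark intends. Note also that the hypothesis $h(G)\geq n(G)/2$ is never used, since you prove $h(G)=3=\alpha(G)$ outright; subject to the minor rewording, your argument settles the conjecture affirmatively, going strictly beyond what the paper establishes.
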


\end{document}